\newtheorem{theorem}{Theorem}[section]
\newtheorem{proposition}[theorem]{Proposition}
\newtheorem{lemma}[theorem]{Lemma}
\newtheorem{corollary}[theorem]{Corollary}
\newtheorem{definition}[theorem]{Definition}
\newtheorem{example}[theorem]{Example}
\newtheorem{remark}[theorem]{Remark}
\numberwithin{equation}{section}
\begin{document}

    \title[Character formulas for Feigin-Stoyanovsky's type subspaces]{Character formulas for Feigin-Stoyanovsky's type subspaces of standard $\mathfrak{sl}(3, \mathbb{C})^{\widetilde{}}$-modules}

    \author{Miroslav Jerković}

    \address{Faculty of Chemical Engineering and Technology, University of Zagreb, Marulićev trg 19, Zagreb, Croatia}

    \curraddr{}

    \email{mjerkov@fkit.hr}

    \thanks{}

    \subjclass[2000]{Primary 17B67; Secondary 17B69, 05A19.\\ \indent Partially supported by the Ministry of Science and Technology of the Republic of Croatia, Project ID 037-0372794-2806}

    \keywords{}

    \date{}

    \dedicatory{}

    \begin{abstract} Exact sequences of Feigin-Stoyanovsky's type subspaces for affine Lie algebra $\mathfrak{sl}(l+1,\mathbb{C})^{\widetilde{}}$ lead to systems of recurrence relations for formal characters of those subspaces. By solving the corresponding system for $\mathfrak{sl}(3,\mathbb{C})^{\widetilde{}}$, we obtain a new family of character formulas for all Feigin-Stoyanovsky's type subspaces at general level. \end{abstract}

    \maketitle

\section{Introduction}

    In this paper we continue our study (cf. \cite{J1, J2}) of Feigin-Stoyanovsky's type subspaces, a family of distinguished subspaces of standard $\mathfrak{sl}(\ell + 1, \mathbb{C})^{\widetilde{}}$-modules. By solving the system of recursive equations for characters of these subspaces in the particular case of Feigin-Stoyanovsky's type subspaces of standard $\mathfrak{sl}(3, \mathbb{C})^{\widetilde{}}$-modules, we obtain a new class of fermionic-type formulas. The technique used in proving that these formulas present the solution of the corresponding system is somewhat similar to the one Andrews used in \cite{A} to solve Rogers-Selberg recursions, but more involved.
    
    We also show that, using appropriate specializations, these formulas reduce to some already known formulas (cf. \cite{FJMMT, FJMMT2}). In those papers the authors use different approaches in calculating character formulas for Feigin-Stoyanovsky's type subspaces. For an overview of these and other results in the area, as well as in studying similar family of so-called principal subspaces, see \cite{J1, J2}, or directly \cite{FS, G, CLM1, CLM2, Ca1, Ca2, FJMMT, FJMMT2}.

    The paper is organized as follows. Section 2 introduces the necessary setting for the subject of interest and gives a definition of a Feigin-Stoyanovsky's type subspace. Furthermore, Theorem \ref{thm_basis} on combinatorial basis consisting of so-called admissible monomial vectors is presented, as well as Theorem \ref{thm_exactness} on exact sequences connecting subspaces at general integer level. Section 3 contains the definition of formal character of a Feigin-Stoyanovsky's type subspace, and presents the system of recursive equations for characters of subspaces at given general integer level. This section also contains the main result, Theorem \ref{thm_solution}, which gives the fermionic-type formulas for Feigin-Stoyanovsky's type subspaces for standard $\mathfrak{sl}(3, \mathbb{C})^{\widetilde{}}$-modules, together with comparison of these formulas with already existing results. Finally, the last section presents the proof of the main results in a series of technical yet elementary lemmas.

    I sincerely thank Mirko Primc for his valuable suggestions.

\section{Feigin-Stoyanovsky's type subspaces}

\subsection{Preliminaries}

    Let $\mathfrak{g}$ be a simple Lie algebra $\mathfrak{sl}(\ell+1,\mathbb{C})$, $\mathfrak{h}$ a Cartan subalgebra, $R$ the root system with fixed simple roots $\alpha_1, \dots, \alpha_{\ell}$, and $\langle \cdot,\cdot\rangle$ the Killing form. We have the root space decomposition $\mathfrak{g} = \mathfrak{h} + \sum_{\alpha \in R} \mathfrak{g}_{\alpha}$, with fixed root vectors $x_{\alpha}$. Denote by $Q=Q(R)$ the root lattice and by $P=P(R)$ the weight lattice of $\mathfrak{g}$, fundamental weights being $\omega_1, \dots, \omega_{\ell}$.

    Let $\tilde{\mathfrak{g}}$ be the associated affine Lie algebra \begin{align*} \tilde{\mathfrak{g}} = \mathfrak{g} \otimes \mathbb{C}[t,t^{-1}] \oplus \mathbb{C} c \oplus \mathbb{C} d, \end{align*} c denoting the canonical central element and d the degree operator, with Lie product given in the usual way (cf. \cite{Kac}). We write $x(n) = x\otimes t^n$ for $x \in \mathfrak{g}$, $n \in \mathbb{Z}$.

    Denote by $\Lambda_0, \dots, \Lambda_{\ell}$ the corresponding fundamental weights of $\tilde{\mathfrak{g}}$. For given integral dominant weight $\Lambda = k_0\Lambda_0 + k_1 \Lambda_1 + \cdots + k_{\ell} \Lambda_{\ell}$, denote by $L(\Lambda)$ the standard $\tilde{\mathfrak{g}}$-module with highest weight $\Lambda$, and by $v_{\Lambda}$ a fixed highest weight vector of $L(\Lambda)$. Let $k=\Lambda(c)=k_0+\cdots + k_{\ell}$ be the level of $L(\Lambda)$.

\subsection{Definition of Feigin-Stoyanovsky's type subspaces}

    For fixed minuscule weight $\omega = \omega_{\ell}$ define \begin{align*}\Gamma= \{ \alpha \in R \mid \langle\alpha, \omega \rangle =1 \} = \{ \gamma_1, \gamma_2, \dots , \gamma_{\ell} \mid \gamma_i = \alpha_i + \dots + \alpha_{\ell} \}.\end{align*} Setting $\mathfrak{g}_0 = \mathfrak{h} + \sum_{\langle\alpha, \omega \rangle=0} \mathfrak{g}_{\alpha}$, $\mathfrak{g}_{\pm 1} = \sum_{\alpha \in \pm \Gamma} \mathfrak{g}_{\alpha}$, we have the following $\mathbb{Z}$-grading: \begin{align*} \mathfrak{g} = \mathfrak{g}_{-1} + \mathfrak{g}_0 + \mathfrak{g}_1,\end{align*} which gives us the corresponding $\mathbb{Z}$-grading of $\tilde{\mathfrak{g}}$: \begin{align*} \tilde{\mathfrak{g}} = \tilde{\mathfrak{g}}_{-1} + \tilde{\mathfrak{g}}_0 + \tilde{\mathfrak{g}}_1, \end{align*} with $\tilde{\mathfrak{g}}_0 = \mathfrak{g}_0 \otimes \mathbb{C}[t,t^{-1}]\oplus \mathbb{C} c \oplus \mathbb{C}d$, $ \tilde{\mathfrak{g}}_{\pm 1} = \mathfrak{g}_{\pm 1}\otimes\mathbb{C}[t,t^{-1}]$. Note that \begin{align*}\tilde{\mathfrak{g}}_1= \textrm{span} \{ x_{\gamma} (n) \mid \gamma \in \Gamma, n \in \mathbb{Z} \}\end{align*} is a commutative subalgebra and a $\tilde{\mathfrak{g}}_0$-module.

    \begin{definition} Feigin-Stoyanovsky's type subspace $W(\Lambda)$ of a standard $\tilde{\mathfrak{g}}$-module $L(\Lambda)$ is defined by \begin{align*} W(\Lambda) = U(\tilde{\mathfrak{g}}_1) \cdot v_{\Lambda},\end{align*} where $U(\tilde{\mathfrak{g}}_1)$ denotes the universal enveloping algebra of $\tilde{\mathfrak{g}}_1$. \end{definition}

\subsection{Combinatorial bases}

    Let us now fix level $k$ and fix an integral dominant highest weight $\Lambda = k_0\Lambda_0 + k_1 \Lambda_1 + \cdots + k_{\ell} \Lambda_{\ell}$ such that $k_0 + \dots + k_{\ell} = k$. Poincar\'{e}-Birkhoff-Witt theorem implies that $W(\Lambda)$ is spanned by the following set of monomial vectors: \begin{align} \label{span_eq1} \{ x(\pi)v_{\Lambda} | x(\pi)= \dots x_{\gamma_1}(-2)^{a_{\ell}} x_{\gamma_{\ell}}(-1)^{a_{\ell-1}} \cdots x_{\gamma_1}(-1)^{a_0}, a_i \in \mathbb{Z}_{+}, i \in \mathbb{Z}_{+}\}.\end{align} Primc in \cite{P1} shows how \eqref{span_eq1} can be reduced to a basis of $W(\Lambda)$ consisting of so-called $(k, \ell + 1)$-admissible monomial vectors:

    \begin{definition} A monomial vector from \eqref{span_eq1} is $(k, \ell + 1)$-admissible for $\Lambda$ if it satisfies difference conditions \begin{align*} a_i + \dots + a_{i+\ell} \leq k, \quad i \in \mathbb{Z}_{+} \end{align*} and initial conditions \begin{align*} a_0 &\leq k_0 \\ a_0+a_1 &\leq k_0 + k_1 \\ \dots \\ a_0 + a_1 + \dots + a_{\ell-1} &\leq k_0 + k_1 + \dots + k_{\ell-1}.\end{align*} \end{definition}

    \begin{theorem} \label{thm_basis} The set of $(k, \ell + 1)$-admissible monomial vectors for $\Lambda$ is a basis of $W(\Lambda)$. \end{theorem}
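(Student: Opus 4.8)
The plan is to prove the two halves of the statement separately: that the $(k, \ell + 1)$-admissible monomial vectors \emph{span} $W(\Lambda)$, and that they are \emph{linearly independent}. Spanning is a straightening argument, whereas linear independence is the genuinely hard part.

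For spanning I would start from the Poincar\'e--Birkhoff--Witt spanning set \eqref{span_eq1} and exhibit the two families of relations in $L(\Lambda)$ responsible for admissibility. The difference conditions $a_i + \dots + a_{i+\ell} \le k$ reflect the integrability of the level-$k$ module: the basic relation $x_{\gamma_1}(z)^{k+1} = 0$, where $\gamma_1 = \theta$ is the highest root and $x_\gamma(z) = \sum_n x_\gamma(n) z^{-n-1}$, together with its images under the adjoint action of $\mathfrak{g}_0$ (which permutes the $\gamma_j$ since $\tilde{\mathfrak{g}}_1$ is a $\tilde{\mathfrak{g}}_0$-module), yields for each window of $\ell+1$ consecutive indices a relation expressing any monomial with $a_i + \dots + a_{i+\ell} > k$ in terms of monomials whose exponent sequence $(a_0, a_1, \dots)$ is strictly smaller. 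The initial conditions arise instead from the highest-weight relations at $v_\Lambda$: the $\mathfrak{sl}(2,\mathbb{C})$-triple attached to the affine simple root $\alpha_0 = \delta - \theta$ gives $x_{\gamma_1}(-1)^{k_0+1} v_\Lambda = 0$, and the remaining inequalities $a_0 + \dots + a_{i-1} \le k_0 + \dots + k_{i-1}$ follow from the analogous relations together with the action of $\mathfrak{g}_0$ on $v_\Lambda$. Fixing a linear order on monomials --- by total degree and then lexicographically on $(a_0, a_1, \dots)$ --- each relation rewrites a non-admissible monomial as a combination of strictly smaller ones, so a Noetherian induction in this order shows that the admissible vectors span $W(\Lambda)$.

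For linear independence I would exploit the embedding of $L(\Lambda)$ into a tensor product of level-one standard modules. Writing $\Lambda = \Lambda_{i_1} + \dots + \Lambda_{i_k}$ as a sum of fundamental weights, there is an embedding $L(\Lambda) \hookrightarrow L(\Lambda_{i_1}) \otimes \dots \otimes L(\Lambda_{i_k})$ sending $v_\Lambda \mapsto v_{\Lambda_{i_1}} \otimes \dots \otimes v_{\Lambda_{i_k}}$, under which $\tilde{\mathfrak{g}}_1$ acts by the iterated coproduct; since $\tilde{\mathfrak{g}}_1$ is commutative there is no reordering ambiguity, and $W(\Lambda)$ embeds into $W(\Lambda_{i_1}) \otimes \dots \otimes W(\Lambda_{i_k})$. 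At level one the difference conditions degenerate to ``at most one particle per window'', and the Frenkel--Kac lattice realization of each $L(\Lambda_{i_j})$ supplies an explicit basis for which linear independence is transparent. Applying the coproduct to a level-$k$ admissible monomial expands it as a sum over the ways of distributing its generators among the $k$ tensor slots; ordering these distributions and identifying the leading one, I would show that distinct $(k,\ell+1)$-admissible configurations produce distinct leading distributions into $k$-tuples of level-one admissible configurations, so that a nontrivial relation among the level-$k$ admissible vectors would contradict independence at level one.

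The step requiring the most care, and the main obstacle, is precisely this combinatorial injectivity: one must verify that the leading-term map sending a $(k,\ell+1)$-admissible sequence to its greedy decomposition into $k$ level-one admissible sequences is well defined and injective, while the initial-condition data $k_0, \dots, k_{\ell-1}$ are correctly encoded by the choice of the fundamental weights $\Lambda_{i_1}, \dots, \Lambda_{i_k}$ and by the level-one initial conditions in each factor. This is where the combinatorics of admissible configurations enters essentially and is the heart of the argument; the spanning half, by contrast, is routine once the reduction relations above are written down.
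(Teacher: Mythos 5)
You should first be aware that the paper contains no proof of Theorem \ref{thm_basis}: it is imported wholesale from Primc \cite{P1} (``Primc in \cite{P1} shows how \eqref{span_eq1} can be reduced to a basis\dots''). So there is no in-paper argument to measure your attempt against; I can only compare it with the strategy of the cited literature, which your outline in fact mirrors closely --- spanning via the integrability relation $x_{\gamma_1}(z)^{k+1}=0$ and the highest-weight relations, linear independence via the Frenkel--Kac level-one realization and the embedding $L(\Lambda)\hookrightarrow L(\Lambda_{i_1})\otimes\cdots\otimes L(\Lambda_{i_k})$. The spanning half is essentially right and essentially complete modulo writing out the leading-term computation for the coefficients of the relations $x_{\gamma_{i_1}}(z)\cdots x_{\gamma_{i_{k+1}}}(z)=0$ and checking that the rewriting terminates in your chosen order.

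The genuine gap is exactly where you say the heart of the argument lies: the linear independence step is asserted, not proved, and it conceals two real difficulties. First, injectivity of the ``greedy'' leading-distribution map is not by itself sufficient. You need a triangularity statement: with respect to a total order on distributions of generators among the $k$ tensor slots, the leading term of $x(\pi)v_\Lambda$ must not be reachable by the lower-order terms of any other admissible $x(\pi')v_\Lambda$, and that order must be compatible with the order in which the level-one leading terms inside each factor $M(1)\otimes\mathbb{C}[P]$ are themselves separated (distinct admissible level-one monomials of the same weight and degree land in the same component $q^m e^{\mu}$-wise, so even the level-one base case needs its own filtration argument rather than being ``transparent''). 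Second, the initial conditions do not come along for free: whether the greedy decomposition of a sequence satisfying $a_0+\cdots+a_{i-1}\le k_0+\cdots+k_{i-1}$ actually lands in the product of the level-one initial conditions depends delicately on the ordering of the fundamental factors, and in the literature this is typically not handled by a direct combinatorial argument but by using simple-current and intertwining operators to reduce general $\Lambda$ to $k\Lambda_0$. Until the triangularity lemma and this reduction are carried out, what you have is a correct and well-informed plan rather than a proof.
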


\subsection{Exact sequences}

    In \cite{J1} we use Theorem \ref{thm_basis}, as well as certain intertwining operators to obtain exact sequences of Feigin-Stoyanovsky's type subspaces for standard modules at arbitrary integer level, as well as accompanying systems of recurrence relations for their formal characters. Also, \cite{J1} relies on known Frenkel-Kac-Segal construction (see \cite{FK, S}, or \cite{FLM}) of fundamental $\tilde{\mathfrak{g}}$-modules $L(\Lambda_i)$ on the tensor product $M(1)\otimes \mathbb{C}[P]$ of the Fock space $M(1)$ for the homogeneous Heisenberg subalgebra and the group algebra $\mathbb{C}[P]$ of the weight lattice with a basis $e^{\lambda}$, $\lambda \in P$.

    We provide a short exposition of the results obtained in \cite{J1}. For fixed $\Lambda = k_0 \Lambda_0 + \dots + k_{\ell} \Lambda_{\ell}$ such that $k_0+\dots + k_{\ell} = k$, $k_i \in \mathbb{Z}_{+}$, $i = 0, \dots, \ell$, denote $W=W_{k_0,k_1,\dots, k_{\ell}} = W(\Lambda)$, and by $v$ highest weight vector of $L(\Lambda)$. Set $m=\sharp \{ i=0,\dots, \ell-1 \mid k_i \neq 0 \}$ and for $t=0, \dots, m-1$ define \begin{align*} W_{I_t} = W_{k_0, \dots, k_{i_0}-1, k_{i_0+1}+1, \dots, k_{i_{t-1}}-1, k_{i_{t-1} + 1}+1, \dots, k_{\ell}},\end{align*} where \begin{align} \label{indices} I_t = \{ \{ i_0, \dots, i_{t-1} \} | 0 \leq i_0 \leq \dots \leq i_{t-1} \leq \ell -1, k_{i_j} \neq 0, j = 0, \dots, t-1 \}.\end{align} By $v_{I_t}$ denote the corresponding highest weight vector.

    Define $U(\tilde{\mathfrak{g}}_1)-$homogeneous mappings $\varphi_t: \sum_{I_t} W_{I_t} \to \sum_{I_{t+1}} W_{I_{t+1}}$ by describing its action on corresponding highest weight vectors: \begin{align*} \varphi_t(v_{I_t}) = \sum_{\genfrac{}{}{0pt}{}{i, k_i \neq 0}{i \notin I_t}} (-1)^{\sharp \{ j \in I_t | j < i\}} v_{I_t \cup \{ i \}}.\end{align*}

    We also use the simple current operator, a linear bijection $[\omega]$ such that \begin{align*} & L(\Lambda_0) \xrightarrow{[\omega]} L(\Lambda_{\ell}) \xrightarrow{[\omega]} L(\Lambda_{\ell-1}) \xrightarrow{[\omega]} \dots \xrightarrow{[\omega]} L(\Lambda_1) \xrightarrow{[\omega]} L(\Lambda_0) \\ & [\omega] v_{\Lambda_0} = v_{\Lambda_{\ell}}, \quad [\omega] v_{\Lambda_i} = x_{\gamma_i}(-1)v_{\Lambda_{i-1}}, \quad i=1,\dots, \ell. \end{align*}

    \begin{theorem} \label{thm_exactness} The following sequence is exact: \begin{align*} 0 \rightarrow W_{k_{\ell},k_0,k_1, \dots, k_{\ell-1}} \xrightarrow{[\omega]^{\otimes k}} W \xrightarrow{\varphi_0} \sum_{I_1} W_{I_1} \xrightarrow{\varphi_1} \dots \xrightarrow{\varphi_{m-1}} W_{I_m} \rightarrow 0. \end{align*} \end{theorem}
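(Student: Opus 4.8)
The plan is to reduce the statement to a purely combinatorial assertion about the admissible monomial bases provided by Theorem~\ref{thm_basis}, and then to recognize the resulting chain complex as a direct sum of Koszul-type complexes indexed by individual monomial shapes. Throughout set $S = \{\, i : 0 \le i \le \ell-1,\ k_i \neq 0 \,\}$, so that $\sharp S = m$ and every index set $I_t$ from \eqref{indices} is a $t$-element subset of $S$; each $W_{I_t}$ is again a Feigin-Stoyanovsky's type subspace, for the weight obtained from $(k_0,\dots,k_\ell)$ by the shifts prescribed above. The maps $\varphi_t$ and $[\omega]^{\otimes k}$ are $U(\tilde{\mathfrak g}_1)$-homogeneous and each source is generated over $U(\tilde{\mathfrak g}_1)$ by its highest weight vector, so every identity among these maps may be checked on highest weight vectors alone.

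First I would verify that the sequence is a complex. The identity $\varphi_{t+1}\circ\varphi_t = 0$ reduces, on highest weight vectors, to the classical cancellation of the signs $(-1)^{\sharp\{ j \in I_t : j<i \}}$ governing insertion into an ordered set, exactly as for a Koszul differential. For the composite $\varphi_0 \circ [\omega]^{\otimes k}$ I would use the intertwining property of the simple current operator to describe the image of $[\omega]^{\otimes k}$ as the span of monomial vectors that are \emph{saturated}, i.e. for which all the initial conditions of Theorem~\ref{thm_basis} hold with equality; on such vectors $\varphi_0$ visibly vanishes.

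The core of the argument is the following reformulation. Translating the shifts defining $W_{I_t}$ into initial conditions, the admissibility inequality of Theorem~\ref{thm_basis} at level $j$ for $W_{I_t}$ reads
\[
a_0 + \dots + a_{j-1} \le (k_0 + \dots + k_{j-1}) - \sharp\{\, p : i_p = j-1 \,\}.
\]
Hence, attaching to a monomial $x(\pi)$ with exponents $a_i$ its slack set
\[
T(\pi) = \{\, i \in S : a_0 + \dots + a_i < k_0 + \dots + k_i \,\},
\]
one finds that $x(\pi)\, v_{I_t}$ is an admissible basis vector of $W_{I_t}$ exactly when $x(\pi)$ obeys the difference conditions and the unshifted initial conditions (that is, is admissible for $W$) and moreover $I_t \subseteq T(\pi)$. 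Since $\varphi_t$ sends $x(\pi)\,v_{I_t}$ to $\sum_i (-1)^{\sharp\{j \in I_t : j<i\}}\, x(\pi)\, v_{I_t \cup \{i\}}$, the whole complex should decompose as a direct sum, over shapes $x(\pi)$ admissible for $W$, of the simplicial cochain complex of the full simplex on the vertex set $T(\pi)$. For $T(\pi)\neq\emptyset$ this simplex is contractible, so the corresponding summand is exact at every term and contributes nothing to homology; for $T(\pi)=\emptyset$ it contributes a single line at the $W$-slot lying in $\ker\varphi_0$. These latter shapes are precisely the saturated ones, so a dimension/character match (or a direct bijection of admissible bases) identifies them with the image of $[\omega]^{\otimes k}$, which is injective because it is built from the bijective simple current operators. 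Surjectivity of $\varphi_{m-1}$ follows similarly, as every admissible shape for $W_{I_m}$ has $T(\pi) = S$ and thus reaches the top cell of its simplex.

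The step I expect to be the main obstacle is making this shape decomposition rigorous, namely showing that $\varphi_t$ really respects the splitting by shape. The difficulty is that $x(\pi)\, v_{I_t \cup \{i\}}$ appearing in $\varphi_t(x(\pi)\,v_{I_t})$ need not be an admissible vector when $x(\pi)$ has no slack at $i$; one must show that in that case it genuinely vanishes in $W_{I_t\cup\{i\}}$, rather than merely reducing to other admissible monomials of a different shape, so that the complex does not mix distinct shapes. Establishing this vanishing, that violating an initial condition of Theorem~\ref{thm_basis} by one unit annihilates the monomial vector, is where the module structure, the Frenkel-Kac-Segal realization, and the precise definition of the simple current operator enter, and is the technically delicate part on which the clean Koszul decomposition rests.
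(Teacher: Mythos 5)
The first thing to note is that this paper does not actually prove Theorem~\ref{thm_exactness}: it is imported wholesale from \cite{J1}, where it is established using the combinatorial bases of Theorem~\ref{thm_basis} together with coefficients of intertwining operators and the Frenkel--Kac--Segal realization. So there is no in-paper proof to match yours against; the comparison has to be with \cite{J1}. Your overall strategy --- reduce everything to the admissible bases, observe that admissibility for $W_{I_t}$ is admissibility for $W$ plus the slack condition $I_t \subseteq T(\pi)$, and recognize the complex as a direct sum over shapes of (augmented) simplicial cochain complexes of simplices on $T(\pi)$ --- is a clean and correct way to organize the combinatorics, and it is consistent in spirit with how \cite{J1} proceeds. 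The sign cancellation giving $\varphi_{t+1}\circ\varphi_t=0$ and the characterization of the shapes with $T(\pi)=\emptyset$ as exactly those with $a_i=k_i$ for $i=0,\dots,\ell-1$ both check out.

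However, the gap you flag at the end is genuine and is not a technicality: it is precisely the representation-theoretic content that Theorem~\ref{thm_basis} alone does not give you, and your argument does not close without it. Two separate inputs are needed. First, the vanishing statement: if $x(\pi)$ satisfies the difference conditions but violates an initial condition for $W_{I_t\cup\{i\}}$ by one unit, then $x(\pi)v_{I_t\cup\{i\}}=0$ (rather than re-expanding into admissible monomials of other shapes). Theorem~\ref{thm_basis} only says the admissible vectors form a basis; it says nothing about how a non-admissible monomial vector decomposes, so without the vanishing your complex need not split by shape and the middle exactness collapses. This vanishing does hold (the initial conditions arise exactly from such annihilations in the Frenkel--Kac--Segal picture, since they involve only the degree $(-1)$ part of the monomial acting on $v_\Lambda$), but it must be proved, and that is where the module structure enters. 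Second, exactness at $W$ needs more than a count of saturated shapes: you must show $[\omega]^{\otimes k}$ carries the admissible basis of $W_{k_\ell,k_0,\dots,k_{\ell-1}}$ bijectively onto the saturated admissible vectors of $W$, which requires the explicit commutation of $[\omega]^{\otimes k}$ with $\tilde{\mathfrak{g}}_1$ (a degree shift $x_\gamma(n)\mapsto x_\gamma(n+1)$, turning an admissible monomial for the cyclically permuted weight into the tail of a saturated monomial for $\Lambda$). Your phrase ``a dimension/character match'' hides exactly this step. With those two lemmas supplied from \cite{J1} (or reproved from the lattice construction), your Koszul decomposition does yield the theorem; without them it is a reduction of the statement, not a proof.
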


    \begin{example} \label{ex_exactness_l2k2} These are exact sequences for $\ell = 2$ and $k=2$: \begin{align*} & 0 \to W_{0,2,0} \to W_{2,0,0} \to W_{1,1,0} \to 0 \\ & 0 \to W_{0,1,1} \to W_{1,1,0} \to W_{0,2,0} \oplus W_{1,0,1} \to W_{0,1,1} \to 0 \\ & 0 \to W_{1,1,0} \to W_{1,0,1} \to W_{0,1,1} \to 0 \\* & 0 \to W_{0,0,2} \to W_{0,2,0} \to W_{0,1,1} \to 0 \\ & 0 \to W_{1,0,1} \to W_{0,1,1} \to W_{0,0,2} \to 0 \\ & 0 \to W_{2,0,0} \to W_{0,0,2} \to 0 \end{align*} \end{example}

\section{Formal characters of Feigin-Stoyanovsky's type subspaces}

\subsection{Definition of formal character}

    \begin{definition} \label{def_degree} For $x(\pi)= \dots x_{\gamma_1}(-2)^{a_{\ell}} x_{\gamma_{\ell}}(-1)^{a_{\ell-1}} \cdots x_{\gamma_1}(-1)^{a_0}$ define degree $d(x(\pi))$ and weight $w(x(\pi))$ of $x(\pi)$: \begin{align*} d(x(\pi)) = \sum_{j=0}^{\infty} \sum_{i = 1}^{\ell} (j+1) a_{i + j \ell -1}, \quad w(x(\pi)) = \sum_{j=0}^{\infty} \sum_{i = 1}^{\ell} \gamma_i a_{i + j \ell -1}.\end{align*} \end{definition}

    \begin{definition} Formal character of $W = W(\Lambda)$ is given by \begin{align}\label{character} \chi(W)(z_1,\dots, z_{\ell}; q)= \sum \dim W^{m,n_1,\dots,n_{\ell}}q^m z_1^{n_1}\cdots z_{\ell}^{n_{\ell}},\end{align} with $W^{m,n_1,\dots,n_{\ell}}$ denoting the component of $W$ spanned by basis monomial vectors $x(\pi) v$ such that $d(x(\pi)) = m$ and $w(x(\pi)) = n_1 \gamma_1 + \dots + n_{\ell} \gamma_{\ell}$. \end{definition}

\subsection{Recurrence relations for formal characters}

    Theorem \ref{thm_exactness} produces a system of recursive equations connecting formal characters of all Feigin-Stoyanovsky's type subspaces of standard $\mathfrak{sl}(\ell + 1, \mathbb{C})^{\widetilde{}}$-modules at arbitrary integer level $k$: \begin{align} \label{system} & \sum_{I} (-1)^{|I|} \chi(W_I)(z_1,\dots, z_{\ell};q) = (z_1q)^{k_0} \dots (z_{\ell}q)^{k_{\ell-1}} \chi(W_{k_{\ell},k_0,\dots, k_{\ell-1}})(z_1q,\dots, z_{\ell}q;q), \end{align} where we sum over all $I$ given by \eqref{indices}. Introducing \begin{align*} \chi(W_{k_0,\dots,k_{\ell}})(z_1,\dots, z_{\ell};q)= \sum_{n_1,\dots, n_{\ell} \geq 0} A_{k_0,\dots, k_{\ell}}^{n_1,\dots, n_{\ell}}(q)z_1^{n_1}\dots z_{\ell}^{n_{\ell}}\end{align*} into \eqref{system} gives \begin{align} \label{A_system} \sum_{I} (-1)^{|I|} A_{I}^{n_1, \dots, n_{\ell}}(q) = q^{n_1 + \dots + n_{\ell}} A_{k_{\ell},k_0,\dots, k_{\ell-1}}^{n_1-k_0, \dots, n_{\ell}-k_{\ell-1}}(q).\end{align} Specially, for $\ell = 2$ the system \eqref{A_system} consists of all equations of the following form (noting that if any of the lower indices is less than zero, set the corresponding summand to vanish): \begin{align} \label{A_system_l2} & A_{k_0,k_1,k_2}^{n_1,n_2}(q) - A_{k_0-1,k_1+1,k_2}^{n_1,n_2}(q) - A_{k_0,k_1-1,k_2+1}^{n_1,n_2}(q) + A_{k_0-1,k_1,k_2+1}^{n_1,n_2}(q) = \\ \nonumber & = q^{n_1+n_2}A_{k_2,k_0,k_1}^{n_1-k_0,n_2-k_1}(q).\end{align} We say that a specific equation of \eqref{A_system_l2} is indexed by triple $(k_0,k_1,k_2)$ if the first term appearing in it is $A_{k_0,k_1,k_2}^{n_1,n_2}(q)$.

    \begin{example} The following system is obtained from Example \ref{ex_exactness_l2k2}: \begin{align} \label{ex_A_system_l2k2} & A_{2,0,0}^{n_1,n_2}(q) - A_{1,1,0}^{n_1,n_2}(q) = q^{n_1+n_2}A_{0,2,0}^{n_1-2,n_2}(q) \\ \nonumber & A_{1,1,0}^{n_1,n_2}(q) - A_{0,2,0}^{n_1,n_2}(q) - A_{1,0,1}^{n_1,n_2}(q) + A_{0,1,1}^{n_1,n_2}(q) = q^{n_1+n_2}A_{0,1,1}^{n_1-1,n_2-1}(q) \\ \nonumber & A_{1,0,1}^{n_1,n_2}(q) - A_{0,1,1}^{n_1,n_2}(q) = q^{n_1+n_2}A_{1,1,0}^{n_1-1,n_2}(q) \\ \nonumber & A_{0,2,0}^{n_1,n_2}(q) - A_{0,1,1}^{n_1,n_2}(q) = q^{n_1+n_2}A_{0,0,2}^{n_1,n_2-2}(q) \\ \nonumber & A_{0,1,1}^{n_1,n_2}(q) - A_{0,0,2}^{n_1,n_2}(q) = q^{n_1+n_2}A_{1,0,1}^{n_1,n_2-1}(q) \\ \nonumber & A_{0,0,2}^{n_1,n_2}(q) = q^{n_1+n_2}A_{2,0,0}^{n_1,n_2}(q). \end{align} \end{example}

\subsection{Character formulas}

    We solved \eqref{A_system_l2} for $\ell = 2$ and $k$ general positive integer, thus obtaining fermionic-type formulas for all Feigin-Stoyanovsky's type subspaces of all standard $\mathfrak{sl}(3, \mathbb{C})^{\widetilde{}}$-modules. Before stating this result we first introduce the appropriate notation.

    \begin{definition} Let $\leq$ be partial ordering defined on $\{0, 1 \}^k$ in the following manner: given $p_1 = (p_{1,1},\dots, p_{1,k})$ and $p_2 = (p_{2,1},\dots, p_{2,k})$ in $\{0, 1 \}^k$ write $p_1 \leq p_2$ if $\sum_{i=1}^j p_{1,i} \geq \sum_{i=1}^j p_{2,i}$ holds for all $j=1, \dots, k$. \end{definition}

    \begin{definition} \label{def_linear} Given $p = (p_1, \dots, p_k) \in \{ 0,1 \}^k$ and two sets of integers $N_{1,1} \geq N_{1,2} \geq \dots \geq N_{1,k} \geq 0$ and $N_{2,k} \geq N_{2,k-1} \geq \dots \geq N_{2,1} \geq 0$, define \begin{align*} l_p^1 (q) = q^{\sum_{i=1}^k p_i N_{1,i}},\quad \delta_p^1(q) =\prod_{i=1}^k (1- \delta_{p_i - p_{i+1},-1} q^{N_{1,i} - N_{1,i+1}}) \\ l_p^2 (q) = q^{\sum_{i=1}^k p_i N_{2,i}}, \quad \delta_p^2(q) =\prod_{i=1}^k (1- \delta_{p_i - p_{i-1},-1} q^{N_{2,i} - N_{2,i-1}}). \end{align*} Set also $N_{1,k+1}=0$ and $N_{2,0}=0$ and note that we let $p_0 = 0$ if we want that $1-q^{N_{2,1}}$ never appears in $\delta_p^2(q)$ or $p_0 = 1$ if we want $1-q^{N_{2,1}}$ to appear in $\delta_p^2(q)$ when $p_1=0$. Similarly, we write $p_{k+1}=0$ if we don't want $1-q^{N_{1,k}}$ to be part of $\delta_p^1(q)$ or $p_{k+1}=1$ if we permit the appearance of $1-q^{N_{1,k}}$ in $\delta_p^1(q)$, in case of $p_k=0$. Unless stated otherwise, we presume that $p_0 = 0$ and $p_{k+1}=0$. \end{definition}

    \begin{remark} Note that although $N_{1,i}, N_{2,i}$, $i =1, \dots, k$, are arguments of above defined functions, we omit them in order to be concise in notation (it will be clear from context which are their values). \end{remark}

    \begin{definition} For $i=0, \dots, k$, define $ \mathcal{P}_{i}^k = \{ (p_1, \dots, p_k) \in \{ 0,1 \}^k | \sum_{j=1}^k p_j = i \},$ i.e. $\mathcal{P}_{i}^k$ contains $k$-tuples from $\{ 0,1 \}^k$ consisting of $i$ ones and $k-i$ zeros. \end{definition}

    \begin{definition} Given $p = (p_1, \dots, p_k) \in \{ 0,1 \}^k$, $i=1,\dots, k$, and $j=1,2$, define the following $k$-tuples by describing their coordinate components for $m=1,\dots, k$: \begin{align*} & (f_{i}^j (p))_m = \left\{ \begin{array}{l} 1-j, \quad p_{m} = j \quad \textrm{and} \quad \sharp \{ p_{n} | p_{n} =j, n \leq m \} \leq i \\ p_{m}, \qquad  \textrm{otherwise} \end{array}\right. \\ & (g_{i}^j (p))_m = \left\{ \begin{array}{l} 1-j, \quad p_{m} = j \quad \textrm{and} \quad \sharp \{ p_{n} | p_{n} =j, n \geq m \} \leq i \\ p_{m}, \qquad \textrm{otherwise} \end{array}\right..\end{align*} \end{definition}

    \begin{remark} It is obvious that these $k$-tuples again belong to $\{ 0,1\}^k$, and are obtained from $p$ by changing first (or last) $i$ ones of $p$ into zeros, or vice versa. Note that $f_i^0(p)$ and $g_i^0(p)$ make sense only if $p \in \mathcal{P}_{j}^{k}$ for some $j \leq k-i$ ($k$-tuples with at least $i$ zeros), while $f_i^1(p)$ and $g_i^1(p)$ are well defined only if $p \in \mathcal{P}_{j}^{k}$ for some $j \geq i$ ($k$-tuples with at least $i$ ones). \end{remark}

    \begin{definition} For $p \in \mathcal{P}_{i}^k$ and $j =1, \dots, i$, define $pos_{1,j}(p)$ by $p_{pos_{1,j}(p)} = 1$ and $\sharp \{ p_n | p_n = 1, n \leq pos_{1,j}(p) \} = j$, i.e. $pos_{1,j}(p)$ defines the coordinate position of $j$-th one in $p$. Also, for $j =1, \dots, k-i$ define $pos_{0,j}(p)$, the coordinate position of $j$-th zero in $p$, by $p_{pos_{0,j}(p)} = 0$ and $\sharp \{ p_n | p_n = 0, n \leq pos_{0,j}(p) \} = j$. \end{definition}

    We have now set ground to state the main result of this paper.

    \begin{theorem} \label{thm_solution} Let $\Lambda = k_0\Lambda_0 + k_1\Lambda_1 + k_2\Lambda_2$ be the highest weight of the level $k$ standard $\mathfrak{sl}(3, \mathbb{C})^{\widetilde{}}$-module $L(\Lambda)$. For the character $\chi(W(\Lambda))(z_1,z_2; q)$ of Feigin-Stoyanovsky's type subspace $W(\Lambda)$ the following formula holds: \begin{align} & \label{thm_solution_eq1} \chi(W(\Lambda))(z_1,z_2; q) = \\ \nonumber & = \sum_{n_1,n_2 \geq 0} \sum_{ \genfrac{}{}{0pt}{}{\genfrac{}{}{0pt}{}{\sum_{i=1}^k N_{1,i} = n_1}{N_{1,1} \geq \cdots \geq N_{1,k} \geq 0}}{\genfrac{}{}{0pt}{}{\sum_{i=1}^k N_{2,i} = n_2}{N_{2,k} \geq \cdots \geq N_{2,1} \geq 0}}} {\frac{q^{\sum_{i=1}^k N_{1,i}^2 + N_{2,i}^2 + N_{1,i}N_{2,i}} L_{k_0,k_1,k_2}(q)} {\prod_{i=1}^k (q)_{N_{1,i}-N_{1,i+1}} \prod_{i=1}^k (q)_{N_{2,i} - N_{2,i-1}}}} z_1^{n_1}z_2^{n_2},\end{align} $L_{k_0,k_1,k_2}(q) = \sum_{p \in \mathcal{P}_{k_1+k_2}^k} l_p^1(q) \delta_{p}^1(q) l_{g_{k_1}^1(p)}^2(q)$ being the "linear" term of the formula. \end{theorem}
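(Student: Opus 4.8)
The plan is to verify that the proposed fermionic formula \eqref{thm_solution_eq1} satisfies the full system of recurrence relations \eqref{A_system_l2} for every admissible index triple $(k_0,k_1,k_2)$, since Theorem \ref{thm_exactness} guarantees that the true characters satisfy this system, and together with appropriate initial/boundary behaviour this will pin down the solution uniquely. Concretely, I would extract from \eqref{thm_solution_eq1} the coefficient $A_{k_0,k_1,k_2}^{n_1,n_2}(q)$ of $z_1^{n_1}z_2^{n_2}$, namely the inner sum over the partition-like data $N_{1,1}\geq\cdots\geq N_{1,k}\geq 0$ with $\sum N_{1,i}=n_1$ and $N_{2,k}\geq\cdots\geq N_{2,1}\geq 0$ with $\sum N_{2,i}=n_2$, weighted by the Gaussian factor $q^{\sum_i (N_{1,i}^2+N_{2,i}^2+N_{1,i}N_{2,i})}$ divided by the product of $q$-factorials, and carrying the linear term $L_{k_0,k_1,k_2}(q)$. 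The key observation is that the Gaussian factor and the denominator are \emph{independent} of the weight labels $(k_0,k_1,k_2)$; only $L_{k_0,k_1,k_2}(q)$ depends on them. Hence each equation of \eqref{A_system_l2} reduces, term by term under the sum over fixed $(N_{1,i}),(N_{2,i})$, to a purely combinatorial identity among the linear terms $L$.

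The main body of work is therefore to prove, for each fixed choice of the quantization data $N_{1,i},N_{2,i}$, the identity
\begin{align*}
& L_{k_0,k_1,k_2}(q) - L_{k_0-1,k_1+1,k_2}(q) - L_{k_0,k_1-1,k_2+1}(q) + L_{k_0-1,k_1,k_2+1}(q) \\
& \qquad = q^{\,n_1+n_2}\,\widetilde{L}_{k_2,k_0,k_1}(q),
\end{align*}
where the right side records the effect of the shift $(n_1,n_2)\mapsto(n_1-k_0,n_2-k_1)$ together with the cyclic relabelling $(k_0,k_1,k_2)\mapsto(k_2,k_0,k_1)$ of the weight. The plan is to expand each $L$ via its definition $L=\sum_{p\in\mathcal{P}_{k_1+k_2}^k} l_p^1\,\delta_p^1\, l_{g_{k_1}^1(p)}^2$ and to reorganize the four-term alternating sum on the left as a single sum over $p\in\{0,1\}^k$ in which the $\delta_p^1$-factors telescope. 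This is exactly where the operations $f_i^j,g_i^j$ (flipping the first/last $i$ ones or zeros) and the positional functions $pos_{1,j},pos_{0,j}$ enter: shifting $k_0\mapsto k_0-1$ or $k_1\mapsto k_1\pm1$ changes the membership class $\mathcal{P}_{k_1+k_2}^k$ and the index $g_{k_1}^1$, and one must match up the surviving monomials $l_p^1 l^2$ across the four $L$-terms, using the $\delta$-factors $1-\delta_{p_i-p_{i+1},-1}q^{N_{1,i}-N_{1,i+1}}$ to cancel the boundary contributions where two consecutive $N$'s would otherwise produce telescoping. I expect this matching-and-telescoping argument, carried out at the level of individual $(0,1)$-strings $p$ and their flips, to be the genuinely delicate part; the partial ordering $\leq$ on $\{0,1\}^k$ should serve to organize which $p$ contribute to which $L$-term.

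Once the linear identity is established for every triple, I would separately check the two degenerate boundary equations of \eqref{A_system_l2} (those with a vanishing summand, e.g.\ the $A_{0,0,2}^{n_1,n_2}=q^{n_1+n_2}A_{2,0,0}^{n_1,n_2}$ type relation and the pure injection at the bottom of the exact sequence), which amount to simpler single-$L$ identities reflecting the $[\omega]$-shift by $(z_1q)^{k_0}(z_2q)^{k_1}$. Finally, to conclude that satisfying \eqref{A_system_l2} determines the character uniquely, I would verify the normalization at $n_1=n_2=0$ (where the sum collapses to the single term $N_{1,i}=N_{2,i}=0$ and $L_{k_0,k_1,k_2}(q)$ must reduce to $1$, matching $\dim W^{0,0,0}=1$) and invoke induction on the total degree $n_1+n_2$: since the right-hand sides of \eqref{A_system_l2} involve only strictly smaller degrees $(n_1-k_0,n_2-k_1)$, the system together with this base case propagates a unique solution, so the formula that satisfies it must be the character. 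The hard part, to reiterate, is the combinatorial telescoping of the four linear terms; the reduction to that identity and the inductive uniqueness are comparatively routine.
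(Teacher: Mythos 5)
Your overall strategy --- extract the coefficients $A_{k_0,k_1,k_2}^{n_1,n_2}(q)$ from \eqref{thm_solution_eq1}, verify that they satisfy the recurrence system \eqref{A_system_l2}, and then argue uniqueness by induction on $n_1+n_2$ from the normalization at $n_1=n_2=0$ --- is the same as the paper's, and your treatment of the left-hand side is sound: since the quadratic exponent and the $q$-factorial denominators do not depend on $(k_0,k_1,k_2)$, the four-term alternating sum does collapse, at fixed $(N_{1,i}),(N_{2,i})$, to the alternating sum of the linear terms, which the paper evaluates in Lemma \ref{lem_0} as $L^{\star}_{k_0,k_1,k_2}(q)$.

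The gap is in your claimed reduction of the \emph{whole} equation to a ``purely combinatorial identity among the linear terms'' at fixed $(N_{1,i}),(N_{2,i})$. The right-hand side $q^{n_1+n_2}A_{k_2,k_0,k_1}^{n_1-k_0,n_2-k_1}(q)$ is a sum over \emph{different} data, namely $\sum_i N_{1,i}=n_1-k_0$ and $\sum_i N_{2,i}=n_2-k_1$, so there is no common summation index against which to compare termwise, and the identity you display (a $q^{n_1+n_2}$ prefactor times a single relabelled linear term) is not what has to be proved. To match the two sides one must re-index the inner sum by the $p$-dependent shifts $N_{2,i}\mapsto N_{2,i}-p_i$ and then $N_{1,i}\mapsto N_{1,i}-p''_i$ (Lemmas \ref{lem_1} and \ref{lem_3} of the paper). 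Under these shifts the quadratic form is not inert: it produces new linear contributions which, combined with the factors $q^{n_1}$ and $q^{n_2}$, convert $l^1_{g_{k_0}^0(p)}(q)\,l^2_{p}(q)$ into $l^1_{f_{k_2}^1(p')}(q)\,l^2_{p'}(q)$ and eventually into $l^1_{p}(q)\,l^2_{g_{k_1}^1(p)}(q)$; the denominators $(q)_{N_{2,i}-N_{2,i-1}}$ shift by one unit, which is precisely what creates or absorbs the factors $1-q^{N_{2,i}-N_{2,i-1}}$ of $\delta^2_{p}(q)$; and the disturbed monotonicity constraints are repaired because the offending boundary terms vanish. An intermediate rearrangement (Lemma \ref{lem_2}, resting on Lemma \ref{lem_smaller} and Proposition \ref{prop_interchange}) is also needed to pass between the $\delta^2$- and $\delta^1$-parameterizations of the linear term. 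This interplay of quadratic term, denominators and summation constraints is the actual core of the proof --- the Andrews-style step alluded to in the introduction --- and your proposal does not contain it. Separately, your induction on $n_1+n_2$ needs a supplementary argument for the equations with $k_0=k_1=0$, where the upper indices on the right-hand side are not decreased.
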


    \begin{example} \label{ex_k2} Let us present character formulas for Feigin-Stoyanovsky's type subspaces of level $2$ standard $\mathfrak{sl}(3, \mathbb{C})^{\widetilde{}}$-modules by first giving the "linear" terms: \begin{align*} L_{2,0,0}(q)&= 1 \\ L_{1,1,0}(q)&= q^{N_{1,1}} + q^{N_{1,2}}(1- q^{N_{1,1} - N_{1,2}}) = q^{N_{1,2}} \\ L_{1,0,1}(q)&=q^{N_{1,1}+N_{2,1}}+q^{N_{1,2} + N_{2,2}}(1-q^{N_{1,1}- N_{1,2}}) \\ L_{0,2,0}(q)&= q^{N_{1,1}+N_{1,2}} \\ L_{0,1,1}(q)&= q^{N_{1,1}+N_{1,2}+N_{2,1}} \\ L_{0,0,2}(q)&= q^{N_{1,1}+N_{1,2}+N_{2,1}+N_{2,2}}.\end{align*} With $L_{k_0,k_1,k_2}(q)$ as above, we now have: \begin{align} & \label{ex_k2_eq1} \chi(W(k_0\Lambda_0 + k_1\Lambda_1 + k_2\Lambda_2))(z_1,z_2; q) = \\ \nonumber & = \sum_{n_1,n_2 \geq 0} \sum_{ \genfrac{}{}{0pt}{}{\genfrac{}{}{0pt}{}{N_{1,1} + N_{1,2} = n_1}{N_{1,1} \geq N_{1,2} \geq 0}}{\genfrac{}{}{0pt}{}{ N_{2,1} + N_{2,2} = n_2}{N_{2,2} \geq N_{2,1} \geq 0}}} \frac{q^{N_{1,1}^2 + N_{1,2}^2 +N_{2,1}^2 + N_{2,2}^2 + N_{1,1}N_{2,1} + N_{1,2}N_{2,2} } L_{k_0,k_1,k_2}(q)}{(q)_{N_{1,1}-N_{1,2}}(q)_{N_{1,2}}(q)_{N_{2,2}-N_{2,1}}(q)_{N_{2,2}}} z_1^{n_1} z_2^{n_2}. \end{align} \end{example}

    \begin{remark} First note that similar remark holds for $L_{k_0,k_1,k_2}(q)$ as for functions defined in Definition \ref{def_linear}, so we omit writing $N_{1,1}$, $N_{1,2}$, $N_{2,1}$, $N_{2,2}$. Furthermore, in Theorem \ref{thm_solution}, as well as in Example \ref{ex_k2}, the linear term is organized in such way that factors from $\delta_{p}^1(q)$ appear (e.g. see $L_{1,0,1}(q)$). It can be shown, and will be of later use, that there is an equivalent way to describe the linear term using $\delta_{p}^2(q)$, with $p \in \mathcal{P}_{k_2}^k$ (cf. Corollary \ref{cor_equality}). \end{remark}

    We first prove the following technical result:

    \begin{lemma} \label{lem_smaller} For every $p \in \mathcal{P}_{i}^k$ the following holds: \begin{align} \label{lem_smaller_eq1} l_p^1(q) = \sum_{\genfrac{}{}{0pt}{}{p' \leq p}{p' \in \mathcal{P}_i^k}} l_{p'}^1(q) \delta_{p'}^1 (q). \end{align} \end{lemma}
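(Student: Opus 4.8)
The plan is to expand the products $\delta_{p'}^1(q)$ appearing on the right-hand side of \eqref{lem_smaller_eq1} and to show that, after reorganizing the resulting signed monomials, everything cancels except the single term $l_p^1(q)$. Throughout I write $P_m=\sum_{j=1}^m p_j$ for the partial sums, and I call an index $i$ an \emph{ascent} of a $0/1$-tuple if its $i$-th and $(i{+}1)$-th entries are $0,1$, and a \emph{descent} if they are $1,0$ (using the convention $p_{k+1}=0$). Since $\delta_{p'}^1(q)=\prod_{i\text{ ascent of }p'}\bigl(1-q^{N_{1,i}-N_{1,i+1}}\bigr)$, expanding the product gives
\[
l_{p'}^1(q)\,\delta_{p'}^1(q)=\sum_{S\subseteq\mathrm{asc}(p')}(-1)^{|S|}\,q^{\sum_j p'_j N_{1,j}+\sum_{i\in S}(N_{1,i}-N_{1,i+1})},
\]
so every term of the full right-hand side is a signed power of $q$ with an explicit exponent.

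The first structural observation I would record is that the ascents of $p'$ occupy pairwise disjoint pairs $\{i,i{+}1\}$ (two adjacent ascents are impossible). Consequently the exponent above equals $\sum_j r_j N_{1,j}$, where $r$ is the tuple obtained from $p'$ by sliding, for each $i\in S$, the $1$ at position $i{+}1$ down to position $i$; indeed each such slide replaces the local contribution $0\cdot N_{1,i}+1\cdot N_{1,i+1}$ by $1\cdot N_{1,i}+0\cdot N_{1,i+1}$. Thus every term is $\pm\,l_r^1(q)$ for some $r\in\mathcal P^k_i$ with $r\le p'\le p$, and I would collect the terms according to this $r$. Sliding the ones back to the right across the descents of $r$ inverts the map $(p',S)\mapsto r$, giving a bijection between the pairs producing a fixed $r$ and the subsets $T$ of descents of $r$ for which the reconstructed tuple $p'(T)$ still satisfies $p'(T)\le p$.

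The decisive simplification is to rewrite the constraint $p'(T)\le p$ in terms of partial sums. Sliding a single $1$ to the right across a descent at position $m$ lowers only the $m$-th partial sum, and by exactly one, so $\sum_{j\le m}p'(T)_j=R_m-[m\in T]$, where $R_m=\sum_{j\le m}r_j$ and $[m\in T]$ is $1$ if $m\in T$ and $0$ otherwise. Hence $p'(T)\le p$ holds precisely when $T\subseteq D$, where $D=\{\,m:\ m\text{ is a descent of }r\text{ and }R_m>P_m\,\}$. The coefficient of $l_r^1(q)$ in the right-hand side is therefore $\sum_{T\subseteq D}(-1)^{|T|}$, which equals $1$ if $D=\emptyset$ and $0$ otherwise. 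It then remains to prove, for $r\le p$, the equivalence $D=\emptyset\iff r=p$; granting it, the right-hand side collapses to $l_p^1(q)$ and the lemma follows.

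The hard part will be this final combinatorial equivalence. The implication $r=p\Rightarrow D=\emptyset$ is immediate. For the converse I would argue by contradiction: if $r\le p$ but $r\ne p$, the quantity $d_m=R_m-P_m$ is nonnegative, vanishes at $m=0$ and $m=k$, and is strictly positive somewhere, so I may pick a maximal block $[a,b]$ on which $d_m>0$. The boundary increments $d_a-d_{a-1}=r_a-p_a>0$ and $d_{b+1}-d_b=r_{b+1}-p_{b+1}<0$ force $r_a=1$ and $r_{b+1}=0$; the largest index $m^\ast\in[a,b]$ with $r_{m^\ast}=1$ is then a descent of $r$ inside the block, whence $R_{m^\ast}>P_{m^\ast}$ and $m^\ast\in D$, a contradiction. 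I expect the bookkeeping in the bijection of the second paragraph and the boundary conventions on $p_0,p_{k+1}$ to require the most care, but no step beyond this block argument should be genuinely difficult.
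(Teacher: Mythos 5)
Your proof is correct and follows essentially the same route as the paper's: expand the products $\delta_{p'}^1(q)$, collect the resulting signed monomials $\pm\, l_r^1(q)$ according to the tuple $r$ they represent, and observe that the coefficient of each $l_r^1(q)$ is an inclusion--exclusion sum over a set of admissible descents, hence vanishes unless that set is empty, which happens only for $r=p$. The only difference is one of detail: your partial-sum/maximal-block argument actually proves the final equivalence (the obstruction set $D$ is empty iff $r=p$) that the paper merely asserts when it notes $n(p')>0$ for all $p'<p$.
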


    \begin{proof} Before we state the proof, note that there is an analogous statement for $l_p^2(q)$, proven in the same fashion as \eqref{lem_smaller_eq1}. We start by multiplying all factors on right-hand side of \eqref{lem_smaller_eq1}. If, for some $p'' \in \mathcal{P}_i^k$, $p'' \leq p$, we multiply $l_{p''}^1 (q)$ with one of the factors $\pm q^{N_{1,i} - N_{1,i+1}}$ appearing in $\delta_{p^{''}}^1 (q)$, we see that we obtain $\pm l_{p'}^1(q)$ for some $p' \in \mathcal{P}_i^k$, $p' \leq p$, since the number of zeros and ones did not change during this operation, and since it is obvious that $p' \leq p'' \leq p$.

    Let us now fix some $p' \in \mathcal{P}_i^k$, $p' \leq p$. It is easy to see that contributions (in the above sense) to $l_{p'}^1(q)$ come from all such $p'' \in \mathcal{P}_i^k$, $p'' \leq p$, that $p''_j = p'_j$ for all $j=1, \dots, k$, except maybe for some $i=1, \dots, k$ for which $p'_i =1$, $p'_{i+1}=0$ and $p''_{i}=0$, $p''_{i+1}=1$ holds. For those $p''$ we write $p'' \sim p'$ (note that $p' \sim p'$). If $p'' \sim p'$ is such that $p''_j = p''_j$ for all $j=1,\dots, k$, except for exactly one such $i$ for which $p'_i=1$, $p'_{i+1}=0$, $p''_i=0$, $p''_{i+1}=1$, then we write $p'' \sim_i p'$. Define $n(p') = \sharp \{ i | \exists p'' \sim_i p'\}$ and note that $n(p') > 0$ for all $p' < p$, while $p$ is the only element of $\mathcal{P}_i^k$ less or equal $p$ such that $n(p)=0$.

    If we now calculate the factor that $l_{p'}^1(q)$ comes with after all multiplications on the right-hand side of \eqref{lem_smaller_eq1} are done, we get \begin{align*} 1- \binom{n(p')}{1} + \binom{n(p')}{2} + \dots + (-1)^{n(p')} \binom{n(p')}{n(p')} = (1+(-1))^{n(p')},\end{align*} which equals zero for all $p' < p$, and one only in case of $p' = p$. This proves the assertion \eqref{lem_smaller_eq1}.\end{proof}

    \begin{proposition} \label{prop_interchange} Set $i, j = 0,\dots, k$ such that $i+j \leq k$. Then \begin{align} \label{prop_interchange_eq1} \sum_{p \in \mathcal{P}_{i+j}^k} l_{p}^1(q) \delta_{p}^1(q) l^2_{g_i^1(p)}(q) = \sum_{p' \in \mathcal{P}_{j}^k} l^1_{g_i^0(p')}(q) l_{p'}^2(q) \delta_{p'}^2(q). \end{align} \end{proposition}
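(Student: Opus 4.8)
The plan is to prove this identity by induction on $i$, with Lemma \ref{lem_smaller} (and its stated analogue for $l_p^2$) serving as the engine that lets me move "partially summed" terms from one side to the other. First I would dispose of the base case $i=0$: here $g_0^1(p)=p$ and $g_0^0(p')=p'$, so the left-hand side reads $\sum_{p\in\mathcal{P}_j^k} l_p^1(q)\delta_p^1(q)l_p^2(q)$ and the right-hand side reads $\sum_{p'\in\mathcal{P}_j^k} l_{p'}^1(q)l_{p'}^2(q)\delta_{p'}^2(q)$. These are not term-by-term equal, so even the base case requires work: I would apply Lemma \ref{lem_smaller} to $l_p^1$ inside the left side (expanding each $l_p^1$ as a sum of $l_{p'}^1\delta_{p'}^1$ over $p'\le p$ is the \emph{wrong} direction; instead I must recognize $\sum_p l_p^1\delta_p^1 l_p^2$ as a coefficient-regrouping that matches the $\delta^2$-expansion of $\sum_{p'} l_{p'}^1 l_{p'}^2$ on the right). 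The cleanest route is to observe that \emph{both} sides, after fully multiplying out all $\delta$-factors, are signed sums of pure monomials $l_p^1(q)l_{p'}^2(q)$, and to verify that the coefficient of each such monomial agrees; the vanishing of the spurious coefficients is governed by exactly the binomial cancellation $(1+(-1))^{n}=0$ used in Lemma \ref{lem_smaller}.

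For the inductive step I would compare the terms indexed by $i$ and by $i+1$. The operators $g_i^1$ and $g_i^0$ are built to flip the last $i$ ones (resp.\ last $i$ zeros), so passing from $g_i^1(p)$ to $g_{i+1}^1(p)$ flips exactly one additional one into a zero, and this single flip is precisely what a factor $\bigl(1-\delta_{p_m-p_{m-1},-1}\,q^{N_{2,m}-N_{2,m-1}}\bigr)$ from $\delta^2$ records. Thus I expect the difference between the level-$i$ and level-$(i+1)$ identities to telescope against one factor of $\delta_p^1$ on the left and one factor of $\delta_{p'}^2$ on the right. Concretely, I would split the sum over $\mathcal{P}_{i+j}^k$ according to the coordinate position $pos_{1,\,i+1}$ of the $(i{+}1)$-th one counted from the appropriate end (this is why the $pos_{r,j}$ notation was set up just before the theorem), peel off the corresponding $\delta$-factor, and re-index so that the remaining sum is the induction hypothesis at level $i$.

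The main obstacle will be bookkeeping the signs and the boundary conventions for $p_0$ and $p_{k+1}$ described in Definition \ref{def_linear}. When a flipped one sits adjacent to another one or to the boundary, the factor $1-\delta_{p_i-p_{i+1},-1}q^{\cdots}$ degenerates to $1$, and whether a factor like $1-q^{N_{1,k}}$ or $1-q^{N_{2,1}}$ appears depends on the chosen value of $p_{k+1}$ or $p_0$; I must check that the flips induced by $g_i^1$ and $g_i^0$ keep these conventions consistent on the two sides. I anticipate handling this by tracking, for each intermediate configuration, the multiset of exponents $N_{1,m}-N_{1,m+1}$ and $N_{2,m}-N_{2,m-1}$ that actually occur, and arguing that the flip-adjacency structure is symmetric under the $g^1/g^0$ correspondence. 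Once the sign and boundary accounting is pinned down, the binomial collapse from Lemma \ref{lem_smaller} finishes each step, and the induction closes at $i+j=k$ where both sides reduce to the single all-ones (resp.\ all-zeros) configuration.
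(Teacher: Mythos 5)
There is a genuine gap here: what you have written is a plan rather than a proof, and the mechanism you propose for the inductive step does not match the combinatorial structure of the $\delta$-factors. The factors $1-\delta_{p_m-p_{m+1},-1}q^{N_{1,m}-N_{1,m+1}}$ and their $\delta^2$ analogues record \emph{adjacent transpositions} of a $(1,0)$ pair into $(0,1)$ within a fixed class $\mathcal{P}_i^k$ (this is exactly the relation $\sim_m$ used in the proof of Lemma \ref{lem_smaller}); they preserve the number of ones. Passing from $g_i^1(p)$ to $g_{i+1}^1(p)$, by contrast, deletes a one outright and moves you from $\mathcal{P}_{j+1}^k$ to $\mathcal{P}_j^k$. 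So the ``single flip'' you want a $\delta^2$-factor to absorb is not an operation a $\delta^2$-factor encodes, and the telescoping you anticipate between the level-$i$ and level-$(i+1)$ identities is not set up: you never say whether $j$ or $i+j$ is held fixed as $i$ increments, and in either case the two identities are sums over different index sets, so their ``difference'' is not defined without further work. The base case is also left unfinished --- ``verify that the coefficient of each monomial $l_{p}^1(q)l_{p'}^2(q)$ agrees'' is precisely the hard part, and it is not a direct consequence of the binomial collapse $(1+(-1))^n=0$ without first characterizing which pairs contribute with which signs.

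The identity in fact needs no induction. The intended argument applies the $l^2$-analogue of \eqref{lem_smaller_eq1} to expand $l^2_{g_i^1(p)}(q)=\sum_{p'\geq g_i^1(p),\,p'\in\mathcal{P}_j^k}l_{p'}^2(q)\delta_{p'}^2(q)$ inside the left-hand side, then interchanges the order of summation using the adjointness of the two truncation maps, namely that for $p\in\mathcal{P}_{i+j}^k$ and $p'\in\mathcal{P}_j^k$ one has $p'\geq g_i^1(p)$ if and only if $p\leq g_i^0(p')$, and finally collapses the resulting inner sum $\sum_{p''\leq g_i^0(p')}l^1_{p''}(q)\delta^1_{p''}(q)=l^1_{g_i^0(p')}(q)$ by a second application of \eqref{lem_smaller_eq1}. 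That adjointness statement is the one thing that genuinely needs checking, and it is absent from your proposal. I would recommend abandoning the induction and verifying that equivalence of the two order conditions directly.
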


    \begin{proof} Using Lemma \ref{lem_smaller} we calculate \begin{align*}& \sum_{p \in \mathcal{P}_{i+j}^k} l_{p}^1(q) \delta_{p}^1(q) l^2_{g_i^1(p)}(q) \overset{\eqref{lem_smaller_eq1}}{=} \sum_{p \in \mathcal{P}_{i+j}^k} l_{p}^1(q) \delta_{p}^1(q) \Big( \sum_{\genfrac{}{}{0pt}{}{p' \geq g_i^1(p)}{p' \in \mathcal{P}_j^k}} l_{p'}^2(q) \delta_{p'}^2 (q) \Big) \overset{(*)}{=} \\ & = \sum_{p' \in \mathcal{P}_{j}^k} \Big( \sum_{\genfrac{}{}{0pt}{}{p'' \leq g_i^0(p')}{p'' \in \mathcal{P}_{i+j}^k}} l^1_{p''}(q) \delta_{p''}^1(q) \Big) l_{p'}^2(q) \delta_{p'}^2(q) \overset{\eqref{lem_smaller_eq1}}{=} \sum_{p' \in \mathcal{P}_{j}^k} l^1_{g_i^0(p')}(q)l_{p'}^2(q) \delta_{p'}^2(q), \end{align*} where $(*)$ denotes the observation that for all $p \in \mathcal{P}_{i+j}^k$ and all $p' \in \mathcal{P}_{j}^k$ the condition $p' \geq g_i^1(p)$ covers all $p' \in \mathcal{P}_{j}^k$, and then that for fixed $p' \in \mathcal{P}_{j}^k$ all those $p'' \in \mathcal{P}_{i+j}^k$ such that $p' \geq g_i^1(p'')$ holds are exactly those for which $p'' \leq g_{i}^0(p')$ is true. \end{proof}

    The following consequence of Proposition \ref{prop_interchange} states that linear term $L_{k_0,k_1,k_2}(q)$ can be rewritten to be parameterized by $k$-tuples from $\mathcal{P}_{k_2}^k$:

    \begin{corollary} \label{cor_equality} By setting $i=k_1$ and $j=k_2$ in \eqref{prop_interchange_eq1} we get: \begin{align} \label{cor_equality_eq1} L_{k_0,k_1,k_2}(q) =\sum_{p \in \mathcal{P}_{k_1 + k_2}^k} l_{p}^1(q) \delta_{p}^1(q) l^2_{g_{k_1}^1(p)}(q) \overset{\eqref{prop_interchange_eq1}}{=} \sum_{p' \in \mathcal{P}_{k_2}^k} l^1_{g_{k_1}^0(p')}(q) l_{p'}^2(q) \delta_{p'}^2(q). \end{align} \end{corollary}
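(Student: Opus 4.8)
The plan is to observe that this is essentially a direct substitution into the already-established Proposition \ref{prop_interchange}, so the work reduces to (i) recognizing the first equality as a definition and (ii) checking that the hypotheses of the proposition are met for the specific choice $i = k_1$, $j = k_2$.

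First I would note that the leftmost equality, $L_{k_0,k_1,k_2}(q) = \sum_{p \in \mathcal{P}_{k_1 + k_2}^k} l_{p}^1(q) \delta_{p}^1(q) l^2_{g_{k_1}^1(p)}(q)$, is nothing but the definition of the linear term $L_{k_0,k_1,k_2}(q)$ as given in the statement of Theorem \ref{thm_solution}; there is nothing to prove here beyond matching notation, since the index set $\mathcal{P}_{k_1+k_2}^k$ and the superscript $g_{k_1}^1$ appearing there coincide with what is written in \eqref{cor_equality_eq1}.

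Next, for the second equality I would invoke Proposition \ref{prop_interchange}. That proposition establishes \eqref{prop_interchange_eq1} under the sole hypothesis that $i, j \in \{0, \dots, k\}$ with $i + j \leq k$. Substituting $i = k_1$ and $j = k_2$ transforms the left-hand side of \eqref{prop_interchange_eq1} precisely into the middle expression of \eqref{cor_equality_eq1} and its right-hand side into $\sum_{p' \in \mathcal{P}_{k_2}^k} l^1_{g_{k_1}^0(p')}(q) l_{p'}^2(q) \delta_{p'}^2(q)$, which is the claimed reparameterization over $\mathcal{P}_{k_2}^k$. The only genuine verification is admissibility of the indices: one must confirm $k_1, k_2 \in \{0,\dots,k\}$ and $k_1 + k_2 \leq k$. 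Both follow immediately from the level condition $k_0 + k_1 + k_2 = k$ together with $k_0, k_1, k_2 \in \mathbb{Z}_{+}$, since then $k_1 + k_2 = k - k_0 \leq k$.

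There is no substantive obstacle here; the content of the corollary lives entirely in Proposition \ref{prop_interchange} (which in turn rests on Lemma \ref{lem_smaller}), and the corollary merely specializes it. Consequently the proof amounts to a single line citing \eqref{prop_interchange_eq1} with the indicated values of $i$ and $j$, preceded by the trivial inequality $k_1 + k_2 \leq k$ that licenses the citation.
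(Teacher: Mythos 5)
Your proposal is correct and matches the paper's own treatment: the corollary is stated precisely as the specialization $i=k_1$, $j=k_2$ of Proposition \ref{prop_interchange}, with the first equality being the definition of $L_{k_0,k_1,k_2}(q)$ from Theorem \ref{thm_solution}. Your added check that $k_1+k_2=k-k_0\leq k$ licenses the citation is the only (trivial) verification needed, and the paper leaves it implicit.
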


    Note finally that linear term \eqref{cor_equality_eq1} simplifies substantially for $k_0=0$ or $k_2=0$: \begin{align*} & k_0 =0: \mathcal{P}_{k_1 + k_2}^k = \mathcal{P}_k^k = \{ (1,\dots, 1) \} \Rightarrow L_{k_0,k_1,k_2}(q)= q^{N_{1,1} + \dots + N_{1,k} + N_{2,1} + \dots + N_{2,k_2}} \\ & k_2=0: P_{k_2}^k = P_0^k = \{ (0,\dots, 0) \} \Rightarrow L_{k_0,k_1,k_2}(q)= q^{N_{1,k_0 + 1} + \dots + N_{1,k}}.\end{align*}

\subsection{Comparison with known results}

    The authors of \cite{FJMMT} embed the dual space of principal subspace $W(\Lambda)$ for level $k$ standard $\mathfrak{sl}(3, \mathbb{C})^{\widetilde{}}$-modules into the space of symmetric polynomials, where they introduce the so-called Gordon filtration. By explicitly calculating components of the associated graded space (using vertex operators), they obtained principally specialized character formulas for $W(\Lambda)$ when $\Lambda= k_0 \Lambda_0 + k_1 \Lambda_1$, i.e. for $k_2=0$: \begin{align} \label{fjmmt_eq1} & \chi_{FJMMT}(W(k_0 \Lambda_0 + k_1 \Lambda_1))(z; q) = \\ \nonumber & \sum_{n \geq 0} \sum_{\genfrac{}{}{0pt}{}{l_1 +l_2 = n}{ l_1,l_2 \geq 0}} \sum_{\genfrac{}{}{0pt}{}{ \sum_j jm_{ij}=l_i}{ i=1,2}} \frac{q^{{}^t m A m - (\textrm{diag} A)\cdot m + 2c_{k_0}^{(3)}\cdot m}}{(q^2)_{m_{11}}\cdots (q^2)_{m_{1k}} (q^2)_{m_{21}}\cdots (q^2)_{m_{2k}}} q^{l_2} z^n, \end{align} where \begin{align*} A&= \left({\begin{array}{c|c} A^{(2)} & B^{(3)} \\ \hline B^{(3)} & A^{(2)}\end{array} }\right) \\ A^{(2)} &= (A_{ab}^{(2)})_{1\leq a,b \leq k}, \quad A_{ab}^{(2)}=2\textrm{min}(a,b) \\ B^{(3)} &= (B_{ab}^{(3)})_{1\leq a,b \leq k}, \quad B_{ab}^{(3)}=\textrm{max}(0,a+b-k) \\ c_{k_0}^{(3)} &=(\underbrace{0,\dots,0,1,2,\dots, k-k_0}_{k}, \underbrace{0,\dots,0}_{k}) \\ m&= {}^t(m_{11},\dots, m_{1k},m_{21},\dots, m_{2k}). \end{align*}

    \begin{example} For $k=2$ we get \begin{align*} A = \left( \begin{array}{cccc} 2 & 2 & 0 & 1 \\ 2 & 4 & 1 & 2 \\ 0 & 1 & 2 & 2 \\ 1 & 2 & 2 & 4 \end{array} \right), \end{align*} while $c_{k_0}^{(3)}$ gives three possible linear terms $L_{k_0,k_1}(m_{11},m_{12},m_{21},m_{22})$ in \eqref{fjmmt_eq1}: \begin{align*} \Lambda = 2\Lambda_0 & \quad \Rightarrow \quad L_{2,0}(m_{11},m_{12},m_{21},m_{22}) = -2 m_{11} - 4 m_{12} - m_{21} - 2 m_{22} \\ \Lambda = \Lambda_0 + \Lambda_1 & \quad \Rightarrow \quad L_{1,1}(m_{11},m_{12},m_{21},m_{22}) = -2 m_{11} - 2 m_{12} - m_{21} - 2 m_{22} \\ \Lambda = 2\Lambda_1 & \quad \Rightarrow \quad L_{0,2}(m_{11},m_{12},m_{21},m_{22}) = -m_{21} - 2 m_{22}. \end{align*} Quadratic term in all three cases is \begin{align*} & Q(m_{11}, m_{12}, m_{21}, m_{22}) = 2m_{11}^2 + 4m_{11}m_{12} + 4m_{12}^2 + \\ & + 2m_{12}m_{21} + 2m_{21}^2+ 2 m_{11}m_{22} + 4m_{12}m_{22} + 4m_{21}m_{22} + 4m_{22}^2, \end{align*} and we get the following formulas: \begin{align} \label{fjmmt_eq2} & \chi_{FJMMT}(W(k_0 \Lambda_0+ k_1 \Lambda_1))(z; q) = \\ \nonumber & = \sum_{n \geq 0} \sum_{\genfrac{}{}{0pt}{}{l_1 +l_2 = n}{ l_1,l_2 \geq 0}} \sum_{\genfrac{}{}{0pt}{}{ \sum_j jm_{ij}=l_i}{ i,j =1,2}} \frac{q^{Q(m_{11}, m_{12}, m_{21}, m_{22}) + L_{k_0, k_1}(m_{11}, m_{12}, m_{21}, m_{22})}}{(q^2)_{m_{11}}(q^2)_{m_{12}}(q^2)_{m_{21}}(q^2)_{m_{22}}} q^{l_2} z^n.\end{align} \end{example}

    It is not hard to demonstrate for $k=2$ and $k_2=0$ that specialized character \begin{align*} \chi_{spec_1}(W(k_0\Lambda_0 + k_1\Lambda_1))(z; q)\end{align*} obtained from \eqref{ex_k2_eq1} by applying suitable specialization \begin{align} \label{spec1_eq1}spec_1: \quad q \to q^2, \quad z_1 \to q^{-2}z, \quad z_2 \to q^{-1}z \end{align} becomes \eqref{fjmmt_eq2} (note that the same can be shown for general $k$). First, after introducing the following dependance among $N_{i,j}$, $i=1, \dots, k$, $j=1,2$, and $m_{11}, m_{12}, m_{21}, m_{22}$: \begin{align} \label{dependence_eq1} & m_{11} := N_{1,1}-N_{1,2}, \quad m_{12} := N_{1,2} \\ \nonumber & m_{21} := N_{2,2} - N_{2,1}, \quad m_{22} := N_{2,1}, \end{align} it turns out that the summation conditions for \eqref{ex_k2_eq1} and \eqref{fjmmt_eq2} are the same: \begin{align*} N_{1,1} + N_{1,2} = n_1, \quad N_{1,1} \geq N_{1,2} \geq 0 \quad & \Rightarrow \quad m_{11} + 2m_{12} = n_1, \quad m_{11},m_{12} \geq 0 \\ N_{2,1} + N_{2,2} = n_2, \quad N_{2,2} \geq N_{2,1} \geq 0 \quad &\Rightarrow \quad m_{21} + 2m_{22} = n_2, \quad m_{21},m_{22} \geq 0. \end{align*} Furthermore, it is obvious that denominator of a summand in \eqref{ex_k2_eq1} by applying \eqref{spec1_eq1} and \eqref{dependence_eq1} transforms into denominator of a corresponding summand in \eqref{fjmmt_eq2}. It is also easy to check that both quadratic and linear parts of \eqref{ex_k2_eq1} transform under \eqref{spec1_eq1} and \eqref{dependence_eq1} into the corresponding parts of \eqref{fjmmt_eq2}, therefore \begin{align*} \chi_{spec_1}(W(k_0\Lambda_0 + k_1\Lambda_1))(z;q) = \chi_{FJMMT}(W(k_0 \Lambda_0+ k_1 \Lambda_1))(z;q). \end{align*}

    Let us now present connection with the main result of \cite{FJMMT2}. The authors study finite $(k,3)$-admissible configurations, sequences of non-negative integers ${\bf a} = (a_i)_{i\in{\bf Z}_{\geq0}}$ satisfying the conditions $a_i + a_{i+1} + a_{i+2} \leq k$ and such that $a_i =0$ for $i \gg 0$. They construct a bijection between the set of such sequences and the set of so-called rigged partitions, consequently obtaining a certain fermionic formula for $\sum_{{\bf{a}} \in C^{(k,l)}_{a,b}[N]} q^{d(\bf{a})}$, with $d({\bf a})=\sum_{i=0}^\infty ia_i$ being the degree of ${\bf a}$, and $C^{(k,l)}_{a,b}[N]$ denoting the set of finite $(k,3)$-admissible configurations of maximal weight $l$ (cf. $(2.4)$ and $(2.5)$ in \cite{FJMMT2}), satisfying initial conditions $a_0 =a$, $a_1 = b$ for $0 \leq a,b \leq k$, and such that $a_i =0$ for $i > N$ (cf. Theorem 4.7 in \cite{FJMMT2}). It is easy to see that the set of $(k,3)$-admissible configurations satisfying initial conditions for given $\Lambda = k_0 \Lambda_0 + k_1 \Lambda_1 + k_2\Lambda_2$ equals to \begin{align*} \bigcup_{\genfrac{}{}{0pt}{}{a \leq k_0}{a+b \leq k_0 + k_1}}C^{(k,k)}_{a,b}[\infty].\end{align*} Since the definition of $d({\bf a})$ differs from one presented in Definition \ref{def_degree}, and since the authors of \cite{FJMMT2} are not interested in the concept of weight, in order to compare the characters we introduce the following specialization: \begin{align*} spec_2: \quad q \to q^2, \quad z_1 \to q^{-2}, \quad z_2 \to q^{-1},\end{align*} which applied to $\chi(W(k_0\Lambda_0 + k_1\Lambda_1 + k_2\Lambda_2))(z_1,z_2; q)$ in \eqref{character} gives \begin{align}\label{fjmmt2_eq1} & \chi_{spec_2}(W(k_0\Lambda_0 + k_1\Lambda_1 + k_2\Lambda_2))(q) = \\ \nonumber & = \sum_{\genfrac{}{}{0pt}{}{a \leq k_0}{a+b \leq k_0 + k_1}} \sum_{{\bf a}\in C^{(k,k)}_{a,b}[\infty]} q^{d(\bf{a})} = \\ \nonumber & = \chi^{(k,k)}_{0, k_0 + k_1}[\infty] + \chi^{(k,k)}_{1, k_0 + k_1 - 1}[\infty] + \dots + \chi^{(k,k)}_{k_0, k_1}[\infty] - \\ \nonumber & \quad - (\chi^{(k,k)}_{0, k_0+k_1+1}[\infty] + \chi^{(k,k)}_{1, k_0 + k_1}[\infty] + \dots + \chi^{(k,k)}_{k_0 - 1, k_1 + 2}[\infty]), \end{align} with \begin{align*} \chi^{(k,k)}_{a, b}[N] = \sum_{m_1, \dots, m_k =0}^{\infty} q^{Q({\bf m}) + \sum_{i=1}^k r_i m_i} \prod_{\genfrac{}{}{0pt}{}{1 \leq j \leq k}{m_j \neq 0}} \begin{bmatrix} jN - \sum_{i=1}^k A_{ji} m_i+ A_{jj} - r_j + m_j \\ m_j \end{bmatrix},\end{align*} \begin{align*} Q({\bf m}) &= \frac{1}{2} (A {\bf m},{\bf m}) - \frac{1}{2} \sum_{j=1}^{k} A_{jj} m_j \\ {\bf m} & = (m_1, \dots, m_k) \\ A_{ij} & = 2\min(i,j) + \max(i+j-k,0) \\ (r_1, \dots, r_k) & = (\underbrace{0,\dots,0}_{a},\underbrace{1,\dots,b}_{b},\underbrace{b+2,\dots, 2k - 2a - b}_{k-a-b}) \\ \begin{bmatrix} m \\ n \end{bmatrix} &= \left\{ \begin{array}{l} \prod_{i=1}^n \frac{1-q^{m-n+1}}{1-q^i} \quad 0 \leq n \leq m \\ 0, \quad \textrm{otherwise} \end{array}\right.. \end{align*}

    Relation \eqref{fjmmt2_eq1} substantially simplifies for $k_0=0$ or $k_2=0$ to \begin{align} \label{fjmmt2_eq2} \chi_{spec_2}(W(k_0\Lambda_0 + k_1\Lambda_1 + k_2\Lambda_2))(q) = \chi^{(k,k)}_{k_0,k_1}[\infty],\end{align} i.e. \eqref{fjmmt2_eq1} doesn't simplify to \eqref{fjmmt2_eq2} if both $k_0 \neq 0$ and $k_2 \neq 0$, which is exactly when linear term of $\chi(W(k_0\Lambda_0 + k_1\Lambda_1 + k_2\Lambda_2))(z_1, z_2; q)$ consists of multiple summands.

\section{Proof of the main result}

    Theorem \ref{thm_solution} is proven by directly checking that formulas \eqref{thm_solution_eq1} for $\chi(W(k_0\Lambda_0 + k_1\Lambda_1 + k_2\Lambda_2))(z_1,z_2; q)$ satisfy the system \eqref{system} when $\ell = 2$ is introduced; more precisely, by checking that corresponding \begin{align} \label{solution_eq1} A_{k_0, k_1, k_2}^{n_1, n_2}(q) = \sum_{ \genfrac{}{}{0pt}{}{\genfrac{}{}{0pt}{}{\sum_{i=1}^k N_{1,i} = n_1}{N_{1,1} \geq \cdots \geq N_{1,k} \geq 0}}{\genfrac{}{}{0pt}{}{\sum_{i=1}^k N_{2,i} = n_2}{N_{2,k} \geq \cdots \geq N_{2,1} \geq 0}}} {\frac{q^{\sum_{i=1}^k N_{1,i}^2 + N_{2,i}^2 + N_{1,i}N_{2,i}} L_{k_0,k_1,k_2}(q)} {\prod_{i=1}^k (q)_{N_{1,i}-N_{1,i+1}} \prod_{i=1}^k (q)_{N_{2,i} - N_{2,i-1}}}}, \end{align} with $L_{k_0,k_1,k_2}(q)$ as in \eqref{thm_solution_eq1}, satisfy the system \eqref{A_system_l2}. We focus on proving that \eqref{solution_eq1} satisfy the most complex equations of that system - those indexed by strictly positive triples $(k_0, k_1, k_2)$, e.g. second equation appearing in \eqref{ex_A_system_l2k2}. The reasoning in checking other equalities of \eqref{A_system_l2} is similar, but simpler.

    The proof is organized in a series of technical lemmas which taken together produce given result (cf. Corollary \ref{cor_proof}). First we investigate the interplay between linear terms of summands appearing at the left-hand side of \eqref{A_system_l2}:

    \begin{lemma} \label{lem_0}For linear term as defined in \eqref{thm_solution_eq1} the following holds: \begin{align*} & L_{k_0,k_1,k_2}(q) - L_{k_0-1,k_1+1,k_2}(q) - L_{k_0,k_1-1,k_2+1}(q) + L_{k_0-1,k_1,k_2+1}(q) = L_{k_0,k_1,k_2}^{\star}(q), \end{align*} where $L_{k_0,k_1,k_2}^{\star}(q) = L_{k_0,k_1,k_2}(q) (1- q^{N_{2, pos_{1, k_2+1}(p)}})$, but with $p_{k+1}= 1$, $p \in \mathcal{P}_{k_1+k_2}^k$. \end{lemma}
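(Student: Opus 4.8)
The plan is to expand each of the four linear terms via its definition $L_{a,b,c}(q)=\sum_{p\in\mathcal{P}_{b+c}^k}l_p^1(q)\delta_p^1(q)l_{g_b^1(p)}^2(q)$ and to group the four resulting sums according to the number of ones in the summation index $p$. Counting ones shows that $L_{k_0,k_1,k_2}$ and $L_{k_0,k_1-1,k_2+1}$ both run over $\mathcal{P}_{k_1+k_2}^k$, while $L_{k_0-1,k_1+1,k_2}$ and $L_{k_0-1,k_1,k_2+1}$ both run over $\mathcal{P}_{k_1+k_2+1}^k$. Within each group the factors $l_p^1(q)\delta_p^1(q)$ are common, and only the second-lattice factor $l_{g_\bullet^1(p)}^2(q)$ changes, its subscript of $g$ differing by one. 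First I would rewrite the left-hand side as the sum of these two grouped differences.

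Next I would use that, for a fixed $p$, the two $g$-images that occur differ in exactly one coordinate. For $p\in\mathcal{P}_{k_1+k_2}^k$ the tuples $g_{k_1}^1(p)$ and $g_{k_1-1}^1(p)$ agree everywhere except at the position $pos_{1,k_2+1}(p)$ of the $(k_2+1)$-th one of $p$, where the former is $0$ and the latter is $1$; hence $l_{g_{k_1-1}^1(p)}^2(q)=l_{g_{k_1}^1(p)}^2(q)\,q^{N_{2,pos_{1,k_2+1}(p)}}$, so the first group collapses to $T_1:=\sum_{p\in\mathcal{P}_{k_1+k_2}^k}l_p^1\delta_p^1 l_{g_{k_1}^1(p)}^2(1-q^{N_{2,pos_{1,k_2+1}(p)}})$. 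The same comparison for $p\in\mathcal{P}_{k_1+k_2+1}^k$, where now $g_{k_1}^1(p)$ carries a $1$ and $g_{k_1+1}^1(p)$ a $0$ at $pos_{1,k_2+1}(p)$, shows the second group equals $-T_2$, with $T_2:=\sum_{p\in\mathcal{P}_{k_1+k_2+1}^k}l_p^1\delta_p^1 l_{g_{k_1+1}^1(p)}^2(1-q^{N_{2,pos_{1,k_2+1}(p)}})$. This is precisely where the factor $1-q^{N_{2,pos_{1,k_2+1}(p)}}$ appearing in $L_{k_0,k_1,k_2}^{\star}(q)$ is produced, and the left-hand side becomes $T_1-T_2$.

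I would then compare $T_1$ with the target $L_{k_0,k_1,k_2}^{\star}(q)$. By definition the latter is the same sum as $T_1$ except that $\delta_p^1(q)$ is computed with the convention $p_{k+1}=1$, which inserts one extra factor $(1-q^{N_{1,k}})$ precisely for those $p$ with $p_k=0$; that is, $L_{k_0,k_1,k_2}^{\star}(q)=T_1-E$ with $E:=\sum_{p\in\mathcal{P}_{k_1+k_2}^k,\,p_k=0}l_p^1\delta_p^1\,q^{N_{1,k}}l_{g_{k_1}^1(p)}^2(1-q^{N_{2,pos_{1,k_2+1}(p)}})$, since $\delta_p^{1,\star}-\delta_p^1=-q^{N_{1,k}}\delta_p^1$ when $p_k=0$ and vanishes otherwise. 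Thus the whole lemma reduces to the single boundary identity $T_2=E$.

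I expect $T_2=E$ to be the main obstacle. The natural first move is the bijection that flips the last zero of a tuple into a one, identifying $\{p\in\mathcal{P}_{k_1+k_2}^k:p_k=0\}$ with $\{p'\in\mathcal{P}_{k_1+k_2+1}^k:p'_k=1\}$; under it $l_p^1 q^{N_{1,k}}=l_{p'}^1$, and one checks directly that $g_{k_1+1}^1(p')=g_{k_1}^1(p)$ and $pos_{1,k_2+1}(p')=pos_{1,k_2+1}(p)$, so the $l^2$ and $(1-q^{N_{2,\cdot}})$ factors match as well. The obstruction is that the $\delta^1$ factors do not match termwise: flipping position $k$ alters the $\delta^1$ factor at index $k-1$ by $(1-q^{N_{1,k-1}-N_{1,k}})$ exactly when $p_{k-1}=0$, and the bijection only accounts for the $p'_k=1$ part of $T_2$. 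I would resolve both issues at once by a finite telescoping over the trailing run of zeros: the mismatch at index $k-1$ produces a residual of the same shape, but supported on tuples with one further forced trailing zero and carrying $q^{N_{1,k-1}}$ in place of $q^{N_{1,k}}$, which is then matched against the still-unused $p'_k=0$ part of $T_2$ by flipping the zero at position $k-1$, and so on. Since a tuple in $\mathcal{P}_{k_1+k_2}^k$ has only $k_0$ zeros (and one in $\mathcal{P}_{k_1+k_2+1}^k$ only $k_0-1$), the forced zero-suffix eventually exceeds the number of available zeros, so both the residual and the remaining part of $T_2$ become empty and the telescope closes. The careful verification that $g_{k_1+1}^1$, $pos_{1,k_2+1}$ and the $\delta^1$ boundary factors transform as claimed at each step is the technical heart of the argument.
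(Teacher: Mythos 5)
Your proposal is correct and follows essentially the same route as the paper: the same grouping of the four terms into two sums over $\mathcal{P}_{k_1+k_2}^k$ and $\mathcal{P}_{k_1+k_2+1}^k$, the same collapse of the $l^2$-differences into the factor $1-q^{N_{2,pos_{1,k_2+1}(p)}}$, and the same reduction to the boundary identity $T_2=E$. The only (cosmetic) difference is that the paper establishes $T_2=E$ by partitioning $\mathcal{P}_{k_1+k_2+1}^k$ into equivalence classes sharing the positions of the first $k_1+k_2$ ones and summing each class at once via the identity $\sum_{p''\sim p'}l^1_{p''}\delta^1_{p''}=l^1_p\delta^1_p q^{N_{1,k}}$, which is exactly your telescoping over the trailing run of zeros read in the transposed order.
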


    \begin{proof} By using \eqref{thm_solution_eq1} we get \begin{align} \label{lem_0_eq1} L_{k_0,k_1,k_2}(q) - L_{k_0,k_1-1,k_2+1}(q) & = \sum_{p \in \mathcal{P}_{k_1+k_2}^k} l_p^1(q) \delta_{p}^1(q) \Big( l_{g_{k_1}^1(p)}^2(q) - l_{g_{k_1-1}^1(p)}^2(q) \Big) \\ \nonumber L_{k_0-1,k_1+1,k_2}(q) - L_{k_0-1,k_1,k_2+1}(q) & = \sum_{p \in \mathcal{P}_{k_1+k_2 +1}^k} l_p^1(q) \delta_{p}^1(q) \Big( l_{g_{k_1 + 1}^1(p)}^2(q) - l_{g_{k_1}^1(p)}^2(q) \Big).\end{align} On $\mathcal{P}_{k_1+k_2+1}^k$ introduce relation $\sim$ that for given $p'$ and $p''$ states $p' \sim p''$ if $pos_{1,i}(p') = pos_{1,i}(p'')$, $i=1, \dots, k_1+k_2$. As class representatives we take $p'$ with greatest possible position of last one: $pos_{1,k_1+k_2 + 1}(p')= k$. Also, if given such $p'$ by $p$ we denote an element of $\mathcal{P}_{k_1+k_2}^k$ such that $pos_{1,i}(p) = pos_{1,i}(p')$ holds for all $i=1,\dots, k_1 + k_2$, then $p' \mapsto p$ gives a bijective correspondence between $p' \in \mathcal{P}_{k_1+k_2+1}^k$ with $pos_{1,k_1+k_2+1}(p') = k$ and $p \in \mathcal{P}_{k_1+k_2}^k$ having $pos_{1,k_1+k_2}(p) < k$. It is not hard to see that for given class representative $p'$ and $p$ as above the following is true: \begin{align} \label{lem_0_eq2} \sum_{p'' \sim p'} l_{p''}^1(q)\delta_{p''}^1(q) = l_{p}^1(q)\delta_{p}^1(q)q^{N_{1,k}}, \quad l_{g_{k_1+1}^1(p'')}^2(q) = l_{g_{k_1}^1(p)}^2(q).\end{align} Using also the obvious fact \begin{align}\label{lem_0_eq3} l_{g_{k_1}^1(p)}^2(q) - l_{g_{k_1-1}^1(p)}^2(q) = l_{g_{k_1}^1(p)}^2(q) (1-q^{N_{2, pos_{1, k_2+1}(p)}})\end{align}we now have for second equation in \eqref{lem_0_eq1}: \begin{align} \label{lem_0_eq4} & L_{k_0-1,k_1+1,k_2}(q) - L_{k_0-1,k_1,k_2+1}(q) \overset{\eqref{lem_0_eq1}}{=} \\ \nonumber & \overset{\eqref{lem_0_eq1}}{=} \sum_{p \in \mathcal{P}_{k_1+k_2 +1}^k} l_p^1(q) \delta_{p}^1(q) \Big( l_{g_{k_1 + 1}^1(p)}^2(q) - l_{g_{k_1}^1(p)}^2(q) \Big) = \\ \nonumber & = \sum_{\genfrac{}{}{0pt}{}{p' \in \mathcal{P}_{k_1+k_2+1}^k}{pos_{1,k_1+k_2+1}(p')=k}} \sum_{p'' \sim p'} l_{p''}^1(q) \delta_{p''}^1(q) \Big( l_{g_{k_1 + 1}^1(p'')}^2(q) - l_{g_{k_1}^1(p'')}^2(q) \Big) \overset{\eqref{lem_0_eq2}}{\underset{\eqref{lem_0_eq3}}{=}} \\ \nonumber & \overset{\eqref{lem_0_eq2}}{\underset{\eqref{lem_0_eq3}}{=}} \sum_{\genfrac{}{}{0pt}{}{p \in \mathcal{P}_{k_1+k_2}^k}{pos_{1,k_1+k_2}(p) < k}} l_{p}^1(q) \delta_{p}^1(q) q^{N_{1,k}} l_{g_{k_1}^1 (p)}^2(q) (1-q^{N_{2,pos_{1,k_2+1}(p)}}), \end{align} and finally \begin{align*} & L_{k_0,k_1,k_2}(q) - L_{k_0-1,k_1+1,k_2}(q) - L_{k_0,k_1-1,k_2+1}(q) + L_{k_0-1,k_1,k_2+1}(q) \overset{\eqref{lem_0_eq1}}{\underset{\eqref{lem_0_eq4}}{=}} \\ & \overset{\eqref{lem_0_eq1}}{\underset{\eqref{lem_0_eq4}}{=}} \sum_{p \in \mathcal{P}_{k_1+k_2}^k} l_p^1(q) \delta_{p}^1(q) l_{g_{k_1}^1 (p)}^2(q) (1-q^{N_{2,pos_{1,k_2+1}(p)}}) - \\ & \quad - \sum_{\genfrac{}{}{0pt}{}{p \in \mathcal{P}_{k_1+k_2}^k}{pos_{1,k_1+k_2}(p) < k}} l_{p}^1(q) \delta_{p}^1(q) q^{N_{1,k}} l_{g_{k_1}^1 (p)}^2(q) (1-q^{N_{2,pos_{1,k_2+1}(p)}}) = \\ & = \sum_{\genfrac{}{}{0pt}{}{p \in \mathcal{P}_{k_1+k_2}^k}{pos_{1,k_1+k_2}(p) = k}} l_{p}^1(q) \delta_{p}^1(q) l_{g_{k_1}^1 (p)}^2(q) (1-q^{N_{2,pos_{1,k_2+1}(p)}}) + \\ & \quad + \sum_{\genfrac{}{}{0pt}{}{p \in \mathcal{P}_{k_1+k_2}^k}{pos_{1,k_1+k_2}(p) < k}} l_{p}^1(q) \delta_{p}^1(q)(1-q^{N_{1,k}}) l_{g_{k_1}^1 (p)}^2(q) (1-q^{N_{2,pos_{1,k_2+1}(p)}}) = \\ &= L_{k_0,k_1,k_2}^{\star}(q). \end{align*} \end{proof}

    \begin{remark} Note that corresponding summands of $L_{k_0,k_1,k_2}(q)$ and $L_{k_0,k_1,k_2}^{\star}(q)$ differ only in the fact that $L_{k_0,k_1,k_2}^{\star}(q)$ can also contain, besides factors appearing in $L_{k_0,k_1,k_2}(q)$, factors $1 - q^{N_{1,k}}$ (for $p \in \mathcal{P}_{k_1 + k_2}^k$ such that $p_k =0$) and $1- q^{N_{2, pos_{1, k_2+1}(p)}}$ (if $k_2 < k$). \end{remark}

    We continue by looking at $q^{n_1+n_2}A_{k_2,k_0,k_1}^{n_1-k_0,n_2-k_1}(q)$, appearing on the right-hand side of \eqref{A_system_l2} which, after having introduced an appropriate formula from \eqref{solution_eq1}, has the following form: \begin{align} \label{solution_eq2} & q^{n_1+n_2}A_{k_2,k_0,k_1}^{n_1 - k_0, n_2 - k_1}(q) = \\ \nonumber & = q^{n_1+n_2}\sum_{ \genfrac{}{}{0pt}{}{\genfrac{}{}{0pt}{}{\sum_{i=1}^k N_{1,i} = n_1 - k_0}{N_{1,1} \geq \cdots \geq N_{1,k} \geq 0}}{\genfrac{}{}{0pt}{}{\sum_{i=1}^k N_{2,i} = n_2 - k_1}{N_{2,k} \geq \cdots \geq N_{2,1} \geq 0}}} \frac{q^{\sum_{i=1}^k N_{1,i}^2 + N_{2,i}^2 + N_{1,i}N_{2,i}}L_{k_2,k_0,k_1}(q)}{\prod_{i=1}^k (q)_{N_{1,i}-N_{1,i+1}} \prod_{i=1}^k (q)_{N_{2,i} - N_{2,i-1}}}.\end{align} Note that here we use version of the linear term appearing on the right-hand side of \eqref{cor_equality_eq1}: $ L_{k_2,k_0,k_1}(q) = \sum_{p \in \mathcal{P}_{k_1}^k} l^1_{g_{k_0}^0(p)}(q) l_{p}^2(q) \delta_{p}^2(q)$.

    \begin{lemma} \label{lem_1} \begin{align*} & q^{n_1+n_2}A_{k_2,k_0,k_1}^{n_1 - k_0, n_2 - k_1}(q) = q^{n_1} \sum_{ \genfrac{}{}{0pt}{}{\genfrac{}{}{0pt}{}{\sum_{i=1}^k N_{1,i} = n_1 - k_0}{N_{1,1} \geq \cdots \geq N_{1,k} \geq 0}}{\genfrac{}{}{0pt}{}{\sum_{i=1}^k N_{2,i} = n_2}{N_{2,k} \geq \cdots \geq N_{2,1} \geq 0}}} \frac{q^{\sum_{i=1}^k N_{1,i}^2 + N_{2,i}^2 + N_{1,i}N_{2,i}} M_{k_0,k_1,k_2}(q)}{\prod_{i=1}^k (q)_{N_{1,i}-N_{1,i+1}} \prod_{i=1}^k (q)_{N_{2,i} - N_{2,i-1}}}, \end{align*} with $M_{k_0,k_1,k_2}(q) = \sum_{p' \in \mathcal{P}_{k_0+k_2}^k} l_{f^1_{k_2}(p')}^1(q) l_{p'}^2(q) \delta_{p'}^2(q)$, stating $p'_0 :=1$. \end{lemma}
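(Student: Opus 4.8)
The plan is to prove the identity by an explicit change of the $N_2$-summation variables, steered by the $k$-tuples that index the linear term. First I substitute into \eqref{solution_eq2} the form of the linear term provided by Corollary \ref{cor_equality}, namely $L_{k_2,k_0,k_1}(q)=\sum_{p\in\mathcal{P}_{k_1}^k} l^1_{g_{k_0}^0(p)}(q)\,l_p^2(q)\,\delta_p^2(q)$, and interchange the order of summation so that $\sum_{p\in\mathcal{P}_{k_1}^k}$ stands outermost. Since $l^1_{g_{k_0}^0(p)}$ depends only on the $N_{1,i}$ while $l_p^2,\delta_p^2$ depend only on the $N_{2,i}$, the $p$-indexed pieces split cleanly from the quadratic exponent and from the denominators. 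For each fixed $p$ I then perform the substitution $\tilde N_{2,i}=N_{2,i}+p_i$ (equivalently $N_{2,i}=\tilde N_{2,i}-p_i$) and pass from $p$ to its complement $p'$, $p'_i=1-p_i$, which runs bijectively over $\mathcal{P}_{k_0+k_2}^k$ as $p$ runs over $\mathcal{P}_{k_1}^k$ because $k-k_1=k_0+k_2$. As $p\in\{0,1\}^k$, the shift raises $\sum_i N_{2,i}$ from $n_2-k_1$ to $n_2$, exactly matching the summation range of the target; the surplus $q^{n_2}$ in the prefactor is what gets absorbed while completing the new quadratic form.

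The core is to check, term by term under $\tilde N_{2,i}=N_{2,i}+p_i$, that the left-hand summand equals the right-hand summand, and this splits into three independent matchings. For the quadratic exponent, expanding $\sum_i\tilde N_{2,i}^2$ and $\sum_i N_{1,i}\tilde N_{2,i}$ and using $p_i^2=p_i$, $\sum_i p_i=k_1$ and $\sum_i\tilde N_{2,i}=n_2$, one finds that $q^{n_2}$ times the old quadratic weight times $l_p^2(N_2)$ equals the new quadratic weight times $l_{p'}^2(\tilde N_2)$ times a residual factor $q^{\sum_i p_i N_{1,i}}$; here one uses that $l_{p'}^2(\tilde N_2)=q^{\sum_i(1-p_i)\tilde N_{2,i}}$ reproduces precisely $q^{\sum_i(1-p_i)N_{2,i}}$. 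The residual $q^{\sum_i p_i N_{1,i}}$ must then turn $l^1_{g_{k_0}^0(p)}$ into $l^1_{f_{k_2}^1(p')}$, which is the pointwise identity $(g_{k_0}^0(p))_m-p_m=(f_{k_2}^1(p'))_m$: both sides equal the indicator of the positions of the last $k_0$ zeros of $p$, as one reads off directly from the definitions, since the last $k_0$ among the $k_0+k_2$ zeros of $p$ are exactly those surviving after the first $k_2$ ones of $p'$ are deleted.

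The remaining and most delicate matching is between $\delta_p^2(N_2)\big/\prod_i(q)_{N_{2,i}-N_{2,i-1}}$ and $\delta_{p'}^2(\tilde N_2)\big/\prod_i(q)_{\tilde N_{2,i}-\tilde N_{2,i-1}}$. Writing $d_i=N_{2,i}-N_{2,i-1}$ and $\tilde d_i=d_i+(p_i-p_{i-1})$ and running through the three cases $p_i-p_{i-1}\in\{0,1,-1\}$, one uses $(q)_{m+1}=(q)_m(1-q^{m+1})$ and $(q)_{m-1}=(q)_m/(1-q^m)$ together with the fact that a $\delta^2$ carries a factor $1-q^{(\cdot)}$ exactly at a descent of its indexing tuple. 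The decisive observation is that a descent of $p$ is an ascent of $p'$ and vice versa, so that in each case the $1-q^{(\cdot)}$ factor produced by one side's $\delta^2$ cancels precisely the change in the Pochhammer symbol that the shift induces on the other side, leaving the two ratios equal. This is the step I expect to be the main obstacle, both for the bookkeeping and because of the boundary at $i=1$: there the convention $p'_0:=1$ stated in the lemma is indispensable, supplying the factor $1-q^{\tilde N_{2,1}}$ exactly when $p_1=1$ (so that $\tilde N_{2,1}=N_{2,1}+1$), which matches $(q)_{\tilde N_{2,1}}=(q)_{N_{2,1}}(1-q^{N_{2,1}+1})$.

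Finally I must confirm that these term-by-term equalities assemble into the stated identity, i.e. that $N_2\mapsto\tilde N_2=N_2+p$ is a bijection between the effective summation domains. Monotonicity and nonnegativity of $\tilde N_2$ can fail only at an ascent of $p$ (a descent of $p'$), and exactly there $\delta_{p'}^2(\tilde N_2)$ carries a factor that vanishes on the offending boundary; symmetrically, validity of $N_2=\tilde N_2-p$ can fail only at a descent of $p$, where $\delta_p^2(N_2)$ vanishes. Thus the $\delta^2$-factors annihilate precisely the terms lying outside the image on either side, so the two weighted sums over the common domain agree. Summing over the complementation bijection $p\leftrightarrow p'$ between $\mathcal{P}_{k_1}^k$ and $\mathcal{P}_{k_0+k_2}^k$ and recognizing the inner sum as $M_{k_0,k_1,k_2}(q)=\sum_{p'\in\mathcal{P}_{k_0+k_2}^k} l^1_{f_{k_2}^1(p')}(q)\,l_{p'}^2(q)\,\delta_{p'}^2(q)$ then yields the claim.
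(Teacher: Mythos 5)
Your proposal is correct and follows essentially the same route as the paper's proof: the shift $N_{2,i}\mapsto N_{2,i}+p_i$ driven by $p\in\mathcal{P}_{k_1}^k$, the complementation $p'=1-p$ with the identity $g_{k_0}^0(p)-p=f_{k_2}^1(p')$, the descent/ascent case analysis matching the $\delta^2$-factors against the change in the Pochhammer symbols (including the $i=1$ boundary handled by $p'_0:=1$), and the observation that the vanishing factors repair the disturbed monotonicity conditions. The only difference is presentational: you phrase the case analysis via descents and ascents of $p$ versus $p'$, while the paper enumerates the four sign patterns of $(p_{i-1},p_i)$ explicitly.
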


    \begin{proof} For each $p = (p_1, \dots, p_k) \in \mathcal{P}_{k_1}^k$ we transform the corresponding summand on the right-hand side of \eqref{solution_eq2} using the following transformation of the summation indices: \begin{align} \label{lem_1_eq1} N_{2,i} \to N_{2,i} - p_i, \quad i = 1, \dots, k.\end{align} After applying \eqref{lem_1_eq1} to $\sum_{i=1}^k N_{2,i} = n_2 - k_1$ we see that, because of $\sum_{i=1}^k p_i = k_1$, the "usual" summation condition for $N_{2,1}, \dots, N_{2,k}$ now holds: $\sum_{i=1}^k N_{2,i} = n_2$. On the other hand, monotony condition $N_{2,k} \geq \cdots \geq N_{2,1} \geq 0$ gets disturbed, but we will tract this problem a bit later.

    We transform the quadratic term together with the summand of the linear term corresponding to $p$, as well as with $q^{n_2}$, appearing in front of the sum sign on the right-hand side of \eqref{solution_eq2}: \begin{align}\label{lem_1_eq2} & k_1 + \sum_{i=1}^k N_{1,i}^2 + N_{2,i}^2 + N_{1,i}N_{2,i} + g_{k_0}^0(p)_i N_{1,i} + p_i N_{2,i} + N_{2,i} \xrightarrow{\eqref{lem_1_eq1}} \\ \nonumber & \xrightarrow{\eqref{lem_1_eq1}} \sum_{i=1}^k N_{1,i}^2 + N_{2,i}^2 + N_{1,i}N_{2,i} + (g_{k_0}^0(p)_i -p_i) N_{1,i} + (1 - p_i) N_{2,i} = \\ \nonumber &= \sum_{i=1}^k N_{1,i}^2 + N_{2,i}^2 + N_{1,i}N_{2,i} + f_{k_2}^1(p')_i N_{1,i} + p'_i N_{2,i}, \end{align} where for $p' \in \mathcal{P}_{k_0+k_2}^k$ defined by $p':=1-p$ it is easy to see that $g_{k_0}^0(p) - p = f_{k_2}^1(p')$. Observe that quadratic term we started with did not itself change at the end, it only contributed to the change in the linear part.

    Transformation of individual factors appearing in $\delta_p^2(q)$ of $L_{k_2,k_0,k_1}(q)$ will be realized together with transformation of the corresponding term in the denominator of the right-hand side of \eqref{solution_eq2}, i.e. we look at how $\frac{1-\delta_{p_{i} - p_{i-1},-1}q^{N_{2,i} - N_{2,i-1}}}{(q)_{N_{2,i} - N_{2,i-1}}}$ transforms under \eqref{lem_1_eq1}. This is also a good place to discuss the change in the monotony condition $N_{2,k} \geq \cdots \geq N_{2,1} \geq 0$. We have the following distinct possibilities:

    \begin{itemize}
    \item[a)] $p_{i} = 0, p_{i-1}=0 \Rightarrow \delta_{p_{i} - p_{i-1},-1}=0$, so corresponding factor of $\delta_p^2(q)$ equals one. Also, no change occurs in $N_{2,i}$ nor $N_{2,i-1}$, so we have \begin{align*} \frac{1}{(q)_{N_{2,i} - N_{2,i-1}}} \xrightarrow{\eqref{lem_1_eq1}} \frac{1}{(q)_{N_{2,i} - N_{2,i-1}}}. \end{align*} Summation condition $N_{2,i} \geq N_{2,i-1}$ also remains the same.
    \item[b)] $p_{i} = 1, p_{i-1}=1 \Rightarrow \delta_{p_{i} - p_{i-1},-1}=0$: although we here have $ N_{2,i} \to N_{2,i}-1$ and $N_{2,i-1}\to N_{2,i-1}-1$, these transformations cancel each other out, and again we get no change nor in factor of $\delta_p^2(q)$: \begin{align*} \frac{1}{(q)_{N_{2,i} - N_{2,i-1}}} \xrightarrow{\eqref{lem_1_eq1}} \frac{1}{(q)_{N_{2,i} - N_{2,i-1}}}, \end{align*} nor in the summation condition $N_{2,i} \geq N_{2,i-1}$.
    \item[c)] $p_{i} = 0, p_{i-1}=1 \Rightarrow \delta_{p_{i} - p_{i-1},-1}=1$, so the corresponding factor in $\delta_p^2(q)$ equals $1-q^{N_{2,i}-N_{2,i-1}}$, and with change $ N_{2,i} \to N_{2,i}$, $N_{2,i-1}\to N_{2,i-1}-1$, we get \begin{align*} \frac{1-q^{N_{2,i}-N_{2,i-1}}}{(q)_{N_{2,i} - N_{2,i-1}}} \xrightarrow{\eqref{lem_1_eq1}} \frac{1-q^{N_{2,i}-N_{2,i-1}+1}}{(q)_{N_{2,i} - N_{2,i-1}+1}} = \left\{\begin{array}{r} 0, N_{2,i} = N_{2,i-1} - 1 \\ \frac{1}{(q)_{N_{2,i} - N_{2,i-1}}}, N_{2,i} \geq N_{2,i-1} \end{array}\right. .\end{align*} On the other hand, monotony condition $N_{2,i} \geq N_{2,i-1}$ by \eqref{lem_1_eq1} changes to $N_{2,i} \geq N_{2,i-1} - 1$. But, as stated above, since the transformed factor equals zero in the case $N_{2,i} = N_{2,i-1} - 1$, we may assume that the monotony condition has not been disturbed by this transformation.
    \item[d)] $p_{i} = 1, p_{i-1}=0 \Rightarrow \delta_{p_{i} - p_{i-1},-1}=0$, and $N_{2,i} \to N_{2,i} - 1$, $N_{2,i-1} \to N_{2,i-1}$. Summation condition $N_{2,i} \geq N_{2,i-1}$ transforms to $N_{2,i} \geq N_{2,i-1} + 1$. We get: \begin{align*} \frac{1}{(q)_{N_{2,i} - N_{2,i-1}}} \xrightarrow{\eqref{lem_1_eq1}} \frac{1}{(q)_{N_{2,i} - N_{2,i-1}-1}} = \frac{1 - q^{N_{2,i} - N_{2,i-1}}}{(q)_{N_{2,i} - N_{2,i-1}}},\end{align*} where the last equality holds for $N_{2,i} \geq N_{2,i-1}$. But, since for $N_{2,i} = N_{2,i-1}$ transformed factor equals zero, we may extend the transformed summation condition $N_{2,i} \geq N_{2,i-1} + 1$ to include also this case, i.e. we may assume that the summation condition remained the same.
    \end{itemize}

    Note that the above discussion includes also the case $i=1$, which explains the appearance of the term $1-q^{N_{2,1}}$ when $p_{1}=1$, i.e. when $p_{1}'=0$.

    It is not hard to see that all the above possibilities can be gathered in this way: \begin{align} \label{lem_1_eq3} \frac{1-\delta_{p_{i} - p_{i-1},-1}q^{N_{2,i} - N_{2,i-1}}}{(q)_{N_{2,i} - N_{2,i-1}}} \xrightarrow{\eqref{lem_1_eq1}} \frac{1 - \delta_{p_{i} - p_{i-1}, 1} q^{N_{2,i} - N_{2,i-1}}}{(q)_{N_{2,i} - N_{2,i-1}}}, \end{align} with monotony summation condition remaining unchanged. After noting that $\delta_{p_{i} - p_{i-1}, 1} = \delta_{p_{i}' - p_{i-1}', -1}$, \eqref{lem_1_eq2} and \eqref{lem_1_eq3} prove the Lemma. \qedhere \end{proof}

    \begin{lemma} \label{lem_2} \begin{align} & M_{k_0,k_1,k_2}(q) = N_{k_0,k_1,k_2}(q), \end{align} where $N_{k_0,k_1,k_2}(q)=\sum_{p'' \in \mathcal{P}_{k_0}^k} l_{p''}^1(q) \delta_{p''}^1(q) l_{f_{k_2}^0(p'')}^2(q)(1-q^{N_{2,pos_{0,1}(f_{k_2}^0 (p''))}})$. \end{lemma}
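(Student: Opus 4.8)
The plan is to prove Lemma \ref{lem_2} by the same interchange mechanism that yields Proposition \ref{prop_interchange}, but now phrased through the ``first'' operations $f_i^j$ instead of the ``last'' operations $g_i^j$, and with careful attention to the boundary convention $p'_0 := 1$ carried by $M_{k_0,k_1,k_2}(q)$. The structural observation is that $M$ is a sum over $\mathcal{P}_{k_0+k_2}^k$ in which the factor $l^2_{p'}\delta^2_{p'}$ already carries its $\delta$ while the first factor $l^1_{f^1_{k_2}(p')}$ does not, whereas $N$ is a sum over $\mathcal{P}_{k_0}^k$ with the roles of the two factors exchanged and an extra boundary factor attached. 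So the strategy is: expand the $\delta$-free factor of $M$ by Lemma \ref{lem_smaller}, interchange the two summations, recombine the result into the $\delta$-free factor of $N$ by a second application of Lemma \ref{lem_smaller}, and account for the boundary term.

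In detail, I would first note that $f^1_{k_2}(p') \in \mathcal{P}_{k_0}^k$ for $p' \in \mathcal{P}_{k_0+k_2}^k$ (turning the first $k_2$ ones into zeros), so the $l^1$ form \eqref{lem_smaller_eq1} of Lemma \ref{lem_smaller} gives $l^1_{f^1_{k_2}(p')}(q) = \sum_{p'' \leq f^1_{k_2}(p'),\, p'' \in \mathcal{P}_{k_0}^k} l^1_{p''}(q)\delta^1_{p''}(q)$. Substituting this into $M$ and interchanging sums yields $M = \sum_{p'' \in \mathcal{P}_{k_0}^k} l^1_{p''}(q)\delta^1_{p''}(q)\Big(\sum_{p':\, f^1_{k_2}(p') \geq p''} l^2_{p'}(q)\delta^{2}_{p'}(q)\Big)$, where the inner $\delta^2$ carries the $p'_0 := 1$ convention. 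I would then establish the ``first'' analogue of the observation $(*)$ used in Proposition \ref{prop_interchange}: for fixed $p'' \in \mathcal{P}_{k_0}^k$ the conditions $f^1_{k_2}(p') \geq p''$ and $p' \geq f^0_{k_2}(p'')$ cut out the same set of $p' \in \mathcal{P}_{k_0+k_2}^k$ (the exact mirror of the adjunction between $g^1_i$ and $g^0_i$, with ``first'' replacing ``last''). Hence the inner sum is $\sum_{p' \geq f^0_{k_2}(p'')} l^2_{p'}(q)\delta^{2}_{p'}(q)$ under the $p'_0 := 1$ convention.

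The main work, and the main obstacle, is the evaluation of this inner sum, which is exactly where the boundary convention enters and where the extra factor of $N$ is born. The target is the boundary-modified $l^2$ form of Lemma \ref{lem_smaller}, namely $\sum_{p' \geq q_*} l^2_{p'}(q)\delta^{2}_{p'}(q) = l^2_{q_*}(q)\,(1 - q^{N_{2,pos_{0,1}(q_*)}})$ for any $q_* \in \mathcal{P}_{k_0+k_2}^k$, where $\delta^2$ uses $p'_0=1$. I would prove this by splitting off the $1$-part of the $i=1$ factor from its $(-q^{N_{2,1}})$-part: the $1$-part restores the ordinary $p'_0=0$ situation and collapses to $l^2_{q_*}(q)$ by the $l^2$ analogue of \eqref{lem_smaller_eq1}, while the $(-q^{N_{2,1}})$-part, summed only over those $p'$ with $p'_1=0$, must be shown to equal $l^2_{f^0_1(q_*)}(q) = l^2_{q_*}(q)\,q^{N_{2,pos_{0,1}(q_*)}}$; combining the two pieces gives the stated boundary factor. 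This restricted boundary sum is the delicate point: one has to run the cancellation combinatorics of Lemma \ref{lem_smaller} while tracking how the forced constraint $p'_1=0$ interacts with the $\delta^2$ factors, and verify that exactly one contribution survives uncancelled and lands on $pos_{0,1}(q_*)$ rather than on position $1$. A small explicit check, e.g. $k=2$, is a reliable guide for pinning down signs and the surviving term.

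Granting this boundary-modified lemma, applied with $q_* = f^0_{k_2}(p'')$, the inner sum becomes $l^2_{f^0_{k_2}(p'')}(q)\,(1 - q^{N_{2,pos_{0,1}(f^0_{k_2}(p''))}})$, so that
\begin{align*}
M = \sum_{p'' \in \mathcal{P}_{k_0}^k} l^1_{p''}(q)\,\delta^1_{p''}(q)\, l^2_{f^0_{k_2}(p'')}(q)\,(1 - q^{N_{2,pos_{0,1}(f^0_{k_2}(p''))}}) = N,
\end{align*}
which is the assertion of the lemma. The only genuinely nontrivial ingredient is the evaluation of the $p'_0=1$ boundary sum; the two applications of Lemma \ref{lem_smaller} and the $f$-adjunction are routine once their $g$-analogues from Proposition \ref{prop_interchange} are in hand.
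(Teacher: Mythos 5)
Your proposal follows the paper's own proof essentially step for step: expand $l^1_{f^1_{k_2}(p')}(q)$ by Lemma \ref{lem_smaller}, interchange the two summations via the adjunction between $f^1_{k_2}$ and $f^0_{k_2}$, and evaluate the inner sum under the $p'_0=1$ convention by splitting off the boundary factor $1-q^{N_{2,1}}$. The one step you defer as "delicate" --- the collapse of the sum restricted to $p'_1=0$ --- is resolved in the paper without any new cancellation argument, by observing that this restricted set is exactly $\{\,p' \geq f^0_{k_2}(p'')_{min}\,\}$, where $f^0_{k_2}(p'')_{min}$ is the smallest element of $\mathcal{P}_{k_0+k_2}^k$ greater than $f^0_{k_2}(p'')$ with vanishing first coordinate, so Lemma \ref{lem_smaller} applies verbatim a second time and yields precisely the value $l^2_{q_*}(q)\,q^{N_{2,pos_{0,1}(q_*)}}$ that you predict.
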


    \begin{proof} Using Lemma \ref{lem_smaller} and similar reasoning as in Proposition \ref{prop_interchange} we get \begin{align*} & M_{k_0,k_1,k_2}(q) = \sum_{p' \in \mathcal{P}_{k_0+k_2}^k} l_{f^1_{k_2}(p')}^1(q) l_{p'}^2(q) \delta_{p'}^2(q) \overset{\eqref{lem_smaller_eq1}}{=} \\ & \overset{\eqref{lem_smaller_eq1}}{=} \sum_{p' \in \mathcal{P}_{k_0+k_2}^k} \Big( \sum_{p'' \leq f^1_{k_2}(p')} l_{p''}^1(q) \delta_{p''}^1(q) \Big) l_{p'}^2(q) \delta_{p'}^2(q) = \\ &= \sum_{p'' \in \mathcal{P}_{k_0}^k} l_{p''}^1(q) \delta_{p''}^1(q) \Big( \sum_{p' \geq f_{k_2}^0(p'')} l_{p'}^2(q) \delta_{p'}^2(q) \Big), \end{align*} so Lemma will be proved after showing \begin{align} \label{lem_2_eq1} \sum_{p' \geq f_{k_2}^0(p'')} l_{p'}^2(q) \delta_{p'}^2(q) = l_{f_{k_2}^0(p'')}^2(q)(1-q^{N_{2,pos_{0,1}(f_{k_2}^0 (p''))}}).\end{align} Although $p'$ are originally set to have $p'_0=1$ (cf. Lemma \ref{lem_1}), we now set $p'_0:=0$ (cf. Definition \ref{def_linear}). Also, denote by $f_{k_2}^0 (p'')_{min}$ the smallest element in $\mathcal{P}_{k_0 + k_2}^k$ greater than $f_{k_2}^0 (p'')$ and such that $(f_{k_2}^0 (p'')_{min})_1=0$. We now prove \eqref{lem_2_eq1}, omitting writing $p'_0 = 0$ after first line: \begin{align*}& \sum_{p' \geq f_{k_2}^0(p'')} l_{p'}^2(q) \delta_{p'}^2(q) = \sum_{\genfrac{}{}{0pt}{}{p' \geq f_{k_2}^0(p'')}{p'_0 =0, p'_1=0}} l_{p'}^2(q) \delta_{p'}^2(q) (1-q^{N_{2,1}})+ \sum_{\genfrac{}{}{0pt}{}{p' \geq f_{k_2}^0(p'')}{p'_0 =0, p'_1=1}} l_{p'}^2(q) \delta_{p'}^2(q) = \\ & = \sum_{p' \geq f_{k_2}^0(p'')} l_{p'}^2(q) \delta_{p'}^2(q) - q^{N_{2,1}} \sum_{\genfrac{}{}{0pt}{}{p' \geq f_{k_2}^0(p'')}{p'_1=0}} l_{p'}^2(q) \delta_{p'}^2(q) \overset{\eqref{lem_smaller_eq1}}{=} \\ & \overset{\eqref{lem_smaller_eq1}}{=} l_{f_{k_2}^0(p'')}^2(q) - q^{N_{2,1}} l_{f_{k_2}^0 (p'')_{min}}^2 (q) = l_{f_{k_2}^0(p'')}^2(q) - q^{N_{2,pos_{0,1}(f_{k_2}^0 (p''))}} l_{f_{k_2}^0(p'')}^2(q) = \\ & = l_{f_{k_2}^0(p'')}^2(q)(1-q^{N_{2,pos_{0,1}(f_{k_2}^0 (p''))}}). \end{align*} \qedhere \end{proof}

    \begin{lemma} \label{lem_3} \begin{align} \label{lem_3_eq1} & q^{n_1} \sum_{ \genfrac{}{}{0pt}{}{\genfrac{}{}{0pt}{}{\sum_{i=1}^k N_{1,i} = n_1 - k_0}{N_{1,1} \geq \cdots \geq N_{1,k} \geq 0}}{\genfrac{}{}{0pt}{}{\sum_{i=1}^k N_{2,i} = n_2}{N_{2,k} \geq \cdots \geq N_{2,1} \geq 0}}} \frac{q^{\sum_{i=1}^k N_{1,i}^2 + N_{2,i}^2 + N_{1,i}N_{2,i}} N_{k_0,k_1,k_2}(q)}{\prod_{i=1}^k (q)_{N_{1,i}-N_{1,i+1}} \prod_{i=1}^k (q)_{N_{2,i} - N_{2,i-1}}} = \\ \nonumber & = \sum_{ \genfrac{}{}{0pt}{}{\genfrac{}{}{0pt}{}{\sum_{i=1}^k N_{1,i} = n_1}{N_{1,1} \geq \cdots \geq N_{1,k} \geq 0}}{\genfrac{}{}{0pt}{}{\sum_{i=1}^k N_{2,i} = n_2}{N_{2,k} \geq \cdots \geq N_{2,1} \geq 0}}} \frac{q^{\sum_{i=1}^k N_{1,i}^2 + N_{2,i}^2 + N_{1,i}N_{2,i}} L_{k_0,k_1,k_2}^{\star}(q)}{\prod_{i=1}^k (q)_{N_{1,i}-N_{1,i+1}} \prod_{i=1}^k (q)_{N_{2,i} - N_{2,i-1}}}. \end{align} \end{lemma}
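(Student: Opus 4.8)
The plan is to derive \eqref{lem_3_eq1} by the same mechanism used in Lemma \ref{lem_1}, now applied to the first family of summation variables. The two sides differ only in the constraint on the $N_{1,i}$ (namely $\sum_i N_{1,i}=n_1-k_0$ on the left against $\sum_i N_{1,i}=n_1$ on the right) together with the prefactor $q^{n_1}$, and in the replacement of $N_{k_0,k_1,k_2}(q)$ by $L_{k_0,k_1,k_2}^\star(q)$. Since $N_{k_0,k_1,k_2}(q)=\sum_{p''\in\mathcal P_{k_0}^k} l_{p''}^1(q)\delta_{p''}^1(q)\,l_{f_{k_2}^0(p'')}^2(q)\bigl(1-q^{N_{2,pos_{0,1}(f_{k_2}^0(p''))}}\bigr)$ is indexed by $p''\in\mathcal P_{k_0}^k$, I would, for each such $p''$, perform the change of summation variables $N_{1,i}\to N_{1,i}-p''_i$, $i=1,\dots,k$. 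Because $\sum_i p''_i=k_0$, this turns $\sum_i N_{1,i}=n_1-k_0$ into $\sum_i N_{1,i}=n_1$, so after the substitution every summand lives on the index set of the right-hand side. Writing $p:=1-p''$ I note $p\in\mathcal P_{k_1+k_2}^k$ because $k_0+k_1+k_2=k$, and $p''\mapsto 1-p''$ is a bijection $\mathcal P_{k_0}^k\to\mathcal P_{k_1+k_2}^k$; the goal is to show that the $p''$-summand is carried exactly onto the $p$-summand of $L_{k_0,k_1,k_2}^\star(q)$.

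First I would track the exponent. Collecting $q^{n_1}$ (with $n_1=k_0+\sum_iN_{1,i}$ in the old variables), the quadratic form, and the linear pieces $l_{p''}^1$ and $l_{f_{k_2}^0(p'')}^2$, and using $(p''_i)^2=p''_i$, the substitution leaves the quadratic form $\sum_i(N_{1,i}^2+N_{2,i}^2+N_{1,i}N_{2,i})$ unchanged and converts the linear exponent into $\sum_i(1-p''_i)N_{1,i}+\sum_i(f_{k_2}^0(p'')-p'')_iN_{2,i}$. The first sum is precisely $l_{p}^1(q)$ with $p=1-p''$. For the second, the key combinatorial identity is $f_{k_2}^0(p'')-p''=g_{k_1}^1(1-p'')$: both vectors are the indicator of the first $k_2$ zeros of $p''$, since $f_{k_2}^0$ turns those zeros into ones, while $g_{k_1}^1$ deletes the last $k_1$ of the $k_1+k_2$ ones of $1-p''$, leaving ones exactly at the first $k_2$ ones of $1-p''$, i.e. at the first $k_2$ zeros of $p''$. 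Hence the $N_2$-linear exponent becomes $l_{g_{k_1}^1(p)}^2(q)$, matching $L_{k_0,k_1,k_2}(q)$.

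Next I would handle the denominator together with $\delta_{p''}^1(q)$ under $N_{1,i}\to N_{1,i}-p''_i$ by the same four-case analysis (a)--(d) as in the proof of Lemma \ref{lem_1}, now for the differences $N_{1,i}-N_{1,i+1}$ and the datum $p''_i-p''_{i+1}$. The outcome is the mirror of \eqref{lem_1_eq3}:
\begin{align*}
\frac{1-\delta_{p''_i-p''_{i+1},-1}q^{N_{1,i}-N_{1,i+1}}}{(q)_{N_{1,i}-N_{1,i+1}}}\ \longrightarrow\ \frac{1-\delta_{p''_i-p''_{i+1},1}q^{N_{1,i}-N_{1,i+1}}}{(q)_{N_{1,i}-N_{1,i+1}}},
\end{align*}
with the monotonicity $N_{1,1}\geq\cdots\geq N_{1,k}\geq0$ preserved (the vanishing of the transformed factor at the boundary absorbs the one-step shift of the inequalities in cases (c) and (d)). Since $\delta_{p''_i-p''_{i+1},1}=\delta_{(1-p'')_i-(1-p'')_{i+1},-1}$, this is exactly $\delta_{p}^1(q)$ with $p=1-p''$. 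I must pay special attention to the boundary $i=k$: here $N_{1,k+1}=0$ is fixed, and when $p''_k=1$ the shift $N_{1,k}\to N_{1,k}-1$ produces the extra factor $1-q^{N_{1,k}}$; this occurs precisely when $p_k=0$, which is why $\delta_p^1(q)$ in $L_{k_0,k_1,k_2}^\star(q)$ is taken with the convention $p_{k+1}=1$, as recorded in Lemma \ref{lem_0}.

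Finally, the factor $1-q^{N_{2,pos_{0,1}(f_{k_2}^0(p''))}}$ involves only the $N_{2,i}$ and is therefore untouched by the substitution; it remains to identify $pos_{0,1}(f_{k_2}^0(p''))=pos_{1,k_2+1}(1-p'')$. This holds because the first zero of $f_{k_2}^0(p'')$ is the $(k_2+1)$-st zero of $p''$ (the first $k_2$ having been flipped to ones), which is the position of the $(k_2+1)$-st one of $1-p''$. Thus the surviving factor becomes $1-q^{N_{2,pos_{1,k_2+1}(p)}}$, exactly the additional factor in $L_{k_0,k_1,k_2}^\star(q)$. Assembling the three computations, the $p''$-summand on the left is transformed into $q^{\sum_i(N_{1,i}^2+N_{2,i}^2+N_{1,i}N_{2,i})}\,l_p^1(q)\delta_p^1(q)l_{g_{k_1}^1(p)}^2(q)\bigl(1-q^{N_{2,pos_{1,k_2+1}(p)}}\bigr)$ divided by the standard denominator, and summing over the bijection $p''\mapsto p=1-p''$ reproduces the right-hand side of \eqref{lem_3_eq1}. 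The main obstacle is bookkeeping rather than conceptual: one must verify the two combinatorial identities $f_{k_2}^0(p'')-p''=g_{k_1}^1(1-p'')$ and $pos_{0,1}(f_{k_2}^0(p''))=pos_{1,k_2+1}(1-p'')$, and, most delicately, check that the $i=k$ boundary exactly accounts for the $p_{k+1}=1$ convention so that no factor is gained or lost there; the degenerate cases ($k_0=0$, $k_2=0$, or $k_2=k$) are treated separately but follow the same lines with the noted simplifications.
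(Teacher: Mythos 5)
Your proposal is correct and follows essentially the same route as the paper's own proof: the substitution $N_{1,i}\to N_{1,i}-p''_i$ for each $p''\in\mathcal P_{k_0}^k$, the bijection $p''\mapsto p=1-p''$, the identities $f_{k_2}^0(p'')-p''=g_{k_1}^1(p)$ and $pos_{0,1}(f_{k_2}^0(p''))=pos_{1,k_2+1}(p)$, the case analysis transforming $\delta_{p''}^1(q)$ against the denominator, and the $i=k$ boundary producing the $1-q^{N_{1,k}}$ factor are all exactly the steps the paper takes (mirroring Lemma \ref{lem_1}). Your explicit verification of the two combinatorial identities is in fact slightly more detailed than the paper's, which asserts them as "easy to see/check."
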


    \begin{proof} Proof is analogous to the one of Lemma \ref{lem_1}: for each given $p'' = (p''_1, \dots, p''_k)$ from $\mathcal{P}_{k_0}^k$ transform the corresponding summand on the left-hand side of \eqref{lem_3_eq1} using the following transformation of summation indices: \begin{align} \label{lem_3_eq2} N_{1,i} \to N_{1,i} - p''_{i}, \quad i = 1, \dots, k. \end{align} After applying \eqref{lem_3_eq2}, the summation condition $\sum_{i=1}^k N_{1,i} = n_1 - k_0$ now reads $\sum_{i=1}^k N_{1,i} = n_1$. Although the monotony condition N$_{1,1} \geq \cdots \geq N_{1,k} \geq 0$ gets disrupted, we will see that it "corrects" after taking into account change in the denominator part of the summand.

    Similarly as in proof of Lemma \ref{lem_1}, we see that transforming the quadratic part does not effect that term itself, but produces some extra terms to be added to the transformed linear part. We calculate this change taking into account also the term $q^{n_1}$, appearing outside the sum sign on the left-hand side of \eqref{lem_3_eq1}: \begin{align}\label{lem_3_eq3} & k_0 + \sum_{i=1}^k N_{1,i}^2 + N_{2,i}^2 + N_{1,i}N_{2,i} + p''_i N_{1,i} + f_{k_2}^0(p'')_i N_{2,i} + N_{1,i} \xrightarrow{\eqref{lem_3_eq2}} \\ \nonumber & \xrightarrow{\eqref{lem_3_eq2}} \sum_{i=1}^k N_{1,i}^2 + N_{2,i}^2 + N_{1,i}N_{2,i} + (1-p''_i) N_{1,i} + (f_{k_2}^0(p'')_i - p''_i) N_{2,i} = \\ \nonumber &= \sum_{i=1}^k N_{1,i}^2 + N_{2,i}^2 + N_{1,i}N_{2,i} + p_i N_{1,i} + g_{k_1}^1(p)_i N_{2,i}, \end{align} where for $p \in \mathcal{P}_{k_1+k_2}^k$ given by $p:=1-p''$ we have $f_{k_2}^0(p'') - p'' = g_{k_1}^1(p)$. Furthermore, it is easy to check that $pos_{0,1}(f_{k_2}^0 (p'')) = pos_{1,k_2+1}(p)$ holds.

    Transformation of individual factors appearing in $\delta_{p''}^1(q)$ of $N_{k_0,k_1,k_2}(q)$ is, in complete analogy with proof of Lemma \ref{lem_1}, carried out together with transformation of the corresponding term in the denominator of the left-hand side of \eqref{lem_3_eq1}, i.e. we look at how $\frac{1-\delta_{p''_{i} - p''_{i+1},-1}q^{N_{1,i} - N_{1,i+1}}}{(q)_{N_{1,i} - N_{1,i+1}}}$ transforms under \eqref{lem_3_eq2}. It can be easily checked that we get (together with special case of $i=k$, which produces $1-q^{N_{1,k}}$ when $p''_{k}=1$, i.e. when $p_{k}'=0$): \begin{align} \label{lem_3_eq4} & \frac{1-\delta_{p''_{i} - p''_{i+1},-1}q^{N_{1,i} - N_{1,i+1}}}{(q)_{N_{1,i} - N_{1,i+1}}} \xrightarrow{\eqref{lem_3_eq2}} \\ \nonumber & \xrightarrow{\eqref{lem_3_eq2}} \frac{1-\delta_{p''_{i} - p''_{i+1},1}q^{N_{1,i} - N_{1,i+1}}}{(q)_{N_{1,i} - N_{1,i+1}}} = \frac{1-\delta_{p_{i} - p_{i+1},-1}q^{N_{1,i} - N_{1,i+1}}}{(q)_{N_{1,i} - N_{1,i+1}}}, \end{align} with summation conditions $N_{1,1} \geq \cdots \geq N_{1,k} \geq 0$ remaining unchanged.

    Finally, after taking into account \eqref{lem_3_eq3} and \eqref{lem_3_eq4}, we get \eqref{lem_3_eq1}. \qedhere \end{proof}

    \begin{corollary} \label{cor_proof} Taken together, Lemmas \ref{lem_0}, \ref{lem_1}, \ref{lem_2}, and \ref{lem_3} give the proof of Theorem \ref{thm_solution}. \end{corollary}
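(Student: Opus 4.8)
The plan is to verify directly that the candidate coefficients \eqref{solution_eq1} satisfy the recurrence \eqref{A_system_l2}, concentrating on the generic equation indexed by a strictly positive triple $(k_0,k_1,k_2)$. The organizing remark is that in \eqref{solution_eq1} the quadratic exponent $\sum_{i=1}^k (N_{1,i}^2+N_{2,i}^2+N_{1,i}N_{2,i})$, the denominator $\prod_{i=1}^k (q)_{N_{1,i}-N_{1,i+1}}\prod_{i=1}^k (q)_{N_{2,i}-N_{2,i-1}}$, and the summation range are all independent of the triple $(k_0,k_1,k_2)$; only the linear term $L_{k_0,k_1,k_2}(q)$ records this data. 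Hence the whole recurrence reduces to a combinatorial identity among linear terms weighted by a common quadratic/denominator factor, and the four lemmas are arranged to produce exactly the two sides of that identity.

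First I would treat the left-hand side. Since the four summands $A_{k_0,k_1,k_2}$, $A_{k_0-1,k_1+1,k_2}$, $A_{k_0,k_1-1,k_2+1}$, $A_{k_0-1,k_1,k_2+1}$ share the same quadratic exponent, denominator, and summation conditions, they combine into a single sum whose numerator carries the alternating combination of their linear terms; by Lemma \ref{lem_0} this combination equals $L_{k_0,k_1,k_2}^{\star}(q)$, so the left-hand side collapses to
\[
\sum \frac{q^{\sum_{i=1}^k (N_{1,i}^2+N_{2,i}^2+N_{1,i}N_{2,i})}\, L_{k_0,k_1,k_2}^{\star}(q)}{\prod_{i=1}^k (q)_{N_{1,i}-N_{1,i+1}}\prod_{i=1}^k (q)_{N_{2,i}-N_{2,i-1}}},
\]
the sum being taken over the same index range as in \eqref{solution_eq1}.

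Next I would transform the right-hand side $q^{n_1+n_2}A_{k_2,k_0,k_1}^{n_1-k_0,n_2-k_1}(q)$ by chaining the remaining three lemmas. Lemma \ref{lem_1} absorbs the factor $q^{n_2}$ and the shift in the $N_2$-indices through the substitution $N_{2,i}\to N_{2,i}-p_i$, restoring $\sum_i N_{2,i}=n_2$ and rewriting the right-hand side as $q^{n_1}$ times the common quadratic/denominator sum, now carrying the linear term $M_{k_0,k_1,k_2}(q)$. Lemma \ref{lem_2} re-expresses $M_{k_0,k_1,k_2}(q)$ as $N_{k_0,k_1,k_2}(q)$, changing the parameterization from $\mathcal{P}_{k_0+k_2}^k$ to $\mathcal{P}_{k_0}^k$. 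Finally Lemma \ref{lem_3}, via the analogous substitution $N_{1,i}\to N_{1,i}-p''_i$, absorbs the remaining $q^{n_1}$ and the deviation $\sum_i N_{1,i}=n_1-k_0$, restoring $\sum_i N_{1,i}=n_1$ and turning $N_{k_0,k_1,k_2}(q)$ into exactly $L_{k_0,k_1,k_2}^{\star}(q)$. At the end of this chain the right-hand side equals the very sum obtained for the left-hand side, so the generic equation holds.

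The delicate point will not be the corollary itself, which merely concatenates the four equalities, but the consistency of the index substitutions of Lemmas \ref{lem_1} and \ref{lem_3}: each substitution must leave the quadratic exponent invariant, its entire change being transferred into the linear term as recorded in \eqref{lem_1_eq2} and \eqref{lem_3_eq3}, and must reconcile the momentarily broken monotonicity conditions with the vanishing of the transformed denominator factors at the boundary (cases a)--d) of Lemma \ref{lem_1} and their $N_1$-analogue in Lemma \ref{lem_3}). Granting those lemmas, Lemma \ref{lem_0} reduces the left-hand side and Lemmas \ref{lem_1}, \ref{lem_2}, \ref{lem_3} reduce the right-hand side to one and the same expression, closing the generic equation; the remaining non-generic equations of \eqref{A_system_l2} (those with some $k_i=0$, where $L_{k_0,k_1,k_2}(q)$ degenerates to a single monomial) follow by the same argument with fewer summands, completing the proof of Theorem \ref{thm_solution}.
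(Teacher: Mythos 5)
Your proposal matches the paper's argument exactly: Lemma \ref{lem_0} collapses the alternating sum on the left of \eqref{A_system_l2} into a single sum with linear term $L_{k_0,k_1,k_2}^{\star}(q)$ (using that the quadratic exponent, denominator and summation range are shared), while Lemmas \ref{lem_1}, \ref{lem_2} and \ref{lem_3} chain to transform $q^{n_1+n_2}A_{k_2,k_0,k_1}^{n_1-k_0,n_2-k_1}(q)$ into the same expression, with the degenerate equations handled by the same but simpler reasoning. This is precisely how the paper assembles the proof of Theorem \ref{thm_solution}.
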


\end{document}